\documentclass{article}
\usepackage{graphicx}
\usepackage{amsmath}
\usepackage{amsfonts}
\usepackage{amssymb}
\usepackage{amsthm}
\usepackage{marvosym}
\usepackage{xcolor}
\usepackage{comment}
\usepackage[T1]{fontenc}
\usepackage[utf8]{inputenc}
\usepackage[english]{babel}
\usepackage{tikz-cd}
\newtheorem{theorem}{Theorem}[section]
\newtheorem{proposition}[theorem]{Proposition}
\newtheorem{corollary}[theorem]{Corollary}
\newtheorem{lemma}[theorem]{Lemma}
\newtheorem{definition}[theorem]{Definition}
\newtheorem{remark}[theorem]{Remark}
\newtheorem{conjecture}[theorem]{Conjecture}
\usepackage{fullpage}

\newcommand{\wa}{A\rtimes W}
\newcommand{\WX}{W\langle X\rangle}
\newcommand{\FX}{F\langle X\rangle}
\newcommand{\WYZ}{W\langle Y\cup Z\rangle}
\newcommand{\bN}{\mathbb{N}}
\newcommand{\cS}{\mathcal{S}}
\newcommand{\cW}{\mathcal{W}}
\newcommand{\cV}{\mathcal{V}}

\DeclareMathOperator{\diag}{diag}

\DeclareMathOperator{\spn}{Span}
\DeclareMathOperator{\gpi}{\textnormal{gPI}}
\DeclareMathOperator{\Polid}{\textnormal{PI}}
\DeclareMathOperator{\gid}{\textnormal{gId}}

\DeclareMathOperator{\id}{\textnormal{Id}}

\DeclareMathOperator{\Hilb}{gHilb}
\DeclareMathOperator{\Sym}{Sym}
\DeclareMathOperator{\mult}{mult}
\DeclareMathOperator{\var}{var}
\DeclareMathOperator{\F}{gF}
\DeclareMathOperator{\img}{Img}

\title{Cocharacters of generalized polynomial identities}

\author{
Sebastiano Argenti \footnote{Corresponding author}  \footnote{Dipartimento di Scienze di Base e Applicate, Universit\`{a} degli Studi della Basilicata, Viale dell'Ateneo Lucano 10, 85100, Potenza (Italy) \\ {\em E-mail address}: sebastiano.argenti@unibas.it} , 
Giovanni Busalacchi \footnote{Dipartimento di Matematica e Informatica, Universit\`{a} degli Studi di Palermo, via Archirafi 34, 90123, Palermo (Italy) \\ {\em E-mail address}: giovanni.busalacchi@unipa.it. } \footnote{ G. Busalacchi was supported by
Doctoral School on Mathematics and Computational Sciences, Universities of Messina, Catania and Palermo. \\
Both authors were partially supported by GNSAGA of INDAM.}
}
\date{}

\begin{document}

\maketitle

\begin{abstract}
In this paper we extend the cocharacter theory to generalized identities of $W$-algebras. We prove that the Hilbert series of the relatively free $W$-algebra admits an expansion in terms of Schur functions whose coefficients coincide with generalized cocharacter multiplicities. Moreover, we prove analogues of the Hook and Strip theorems for $W$-algebras and we derive growth bounds for generalized codimension and colenght sequences. Finally, we establish that every variety $\mathcal{V}$ of $W$-algebras is generated by the Grassmann envelope of a finitely generated $W$-superalgebra, and if $\mathcal{V}$ satisfies a generalized Capelli set, then it is generated by a finitely generated $W$-algebra.\\
    
    {\it Keywords: generalized polynomial identities, codimensions, cocharacters, Hilbert series, Hook theorem, Strip theorem.}
    
    {\it 2020 MSC Classification: Primary 16R10, 16R50, Secondary 16W99.}
\end{abstract}

\section{Introduction}
In the framework of associative algebras endowed with additional structure, the theory of $W$-algebras and their generalized polynomial identities was initiated by Amitsur \cite{AM1} in 1965. Then, this subject was further developed by several authors and we refer to \cite{BMM96} and its bibliography for an extensive treatment of the topic. These identities extend the classical notion of polynomial identities by allowing the elements of a fixed algebra $W$ to appear between the variables. A combinatorial approach to generalized identities has gained increasing attention in recent years. In fact, \cite{Gor10} proved the existence of the generalized exponent of any finite dimensional algebra $A$ in case $A=W$ acts on itself. This result was later extended in \cite{MR25b} to any unital finite dimensional $W$-algebra $A$, where $W$ is not necessarily equal to $A$.

A central object of study in this context is the free $W$-algebra $W\langle X \rangle$, in which we can find the ideal of generalized identities $\gid (A)$ containing all polynomial relations satisfied by a given $W$-algebra $A$. The structure of this ideal has been extensively investigated, notably for specific classes of $W$-algebras, such as full matrix algebras \cite{BS15} or algebras of triangular matrices \cite{MR25a}.
Parallel developments in \cite{MR25b} led to a better understanding of the role of multipliers in defining bimodule actions and its connection to the the growth of $T_W$-ideals.

One of the most powerful tools in the theory of ordinary PI-algebras is the use of representation theory of symmetric groups and general linear groups, via the analysis of cocharacters and codimension sequences \cite{Ber82}. Classical results have shown that the cocharacter multiplicities of PI-algebras lie within a hook \cite{AR82}, and, under suitable finiteness conditions, they lie within a strip \cite{Reg79}. This facts have been used to obtain estimates of the asymptotic behavior of the codimension sequence.

In this paper, we further extend these ideas to the setting of generalized identities by proving the analogous of the hook and strip theorems for $W$-algebras. More precisely, we study the actions of the symmetric group $S_n$ on multilinear generalized polynomials and of the general linear group $GL_k(F)$ on homogeneous generalized polynomials. Furthermore, we consider the Hilbert series of relatively free $W$-algebras and we prove that they admit an expansion in terms of Schur functions whose coefficients coincide with generalized cocharacter multiplicities (Theorem \ref{th: Hilbert}). As a consequence, for any $W$-algebra $A$ satisfying an ordinary polynomial identity, we derive growth bounds for generalized codimension and colength sequences and we show that they behave similarly to their ordinary counterparts (Theorems \ref{th: hook}).
Moreover, if $A$ has unity, we can prove better bounds on its generalized multiplicities (Lemma \ref{lem: multiplicity bound}) and, in this case, the hook determined by the generalized cocharacters coincides with the hook determined by the ordinary cocharacters (Theorem \ref{th: good_hook}).

Finally, we show that every variety $\cV$ of $W$-algebras is generated by the Grassmann envelope of a finitely generated $W$-superalgebra (Theorem \ref{th: Grassmann envelope}). If $\cV$ satisfies a generalized Capelli set, then it is generated by a finitely generated $W$-algebra (Corollary \ref{cor: fin_gen}). 

\section{Preliminaries}

Let $F$ be a field of characteristic zero. Throughout this paper all the algebras will be associative and defined over $F$.

Let $W$ be a finite dimensional associative algebra with unity. We say that the associative algebra $A$ is a $W$-algebra if $A$ is an unitary $W$-bimodule such that, for any $w\in W$ and $a_1,a_2\in A$, we have \begin{equation}\label{eq: W axioms}
    w(a_1a_2)=(wa_1)a_2,\ (a_1a_2)w= a_1(a_2w),\ (a_1w)a_2=a_1(wa_2)
\end{equation}

Given the $W$-algebras $A$ and $B$, an homomorphism of $W$-algebras from $A$ to $B$ is an algebra homomorphism $\varphi:A\rightarrow B$ such that $\varphi(w_1aw_2)=w_1\varphi(a)w_2$, for each $a\in A$ and $w_1,w_2\in W$. Thus, for a fixed $W$, the class of $W$-algebras is a category. It is also a variety in the sense of universal algebra. 

The variety of associative $W$-algebras contains the free associative $W$-algebra $\WX$, generated by the countable set of variables $X=\{x_1,x_2,\dots\}$ which is defined, up to isomorphism, by the following universal property: for every $W$-algebra $A$ and every function $f:X\rightarrow A$, there exists a unique $W$-algebra homomorphism $\tilde{f}: \WX \rightarrow A$ extending $f$, that is $\tilde{f}\mid_X = f$.

In \cite{MR25a}, the authors give a combinatorial construction of $\WX$, depending on a choice of basis of $W$. Here, we give an alternative, basis independent, construction of $\WX$. Consider the vector space $V_X$ consisting of the finite formal linear combinations of the variables in $X$, and, for each $n\geq 1$, define \begin{equation}\label{eq: W^n}
	W^{(n)}\langle X\rangle = W\otimes \underbrace{V_X\otimes W \otimes V_X \otimes \cdots W \otimes V_X}_n \otimes W 
\end{equation}
For each $n,m\geq1$, there exists a natural map $W^{(m)}\langle X\rangle\otimes W^{(n)}\langle X\rangle \rightarrow W^{(m+n)}\langle X\rangle$ obtained by contracting the two neighboring $W$ factors in the tensor product according to the multiplication map $W\otimes W\rightarrow W$ of $W$. This induces an algebra structure on \begin{equation}\label{eq: WX}
	W\langle X\rangle = \bigoplus_{n=1}^\infty W^{(n)}\langle X\rangle
\end{equation}
Moreover, $W\langle X \rangle$ is naturally a $W$-algebra using the multiplication of $W$ on the left and on the right, and it satisfies the universal property of a free $W$-algebra. Note that, once we fix a basis $w_1,\dots,w_d$ of $W$, a basis of $W\langle X\rangle$ is given by the monomials $w_{i_0}x_{j_1}w_{i_1}x_{j_2}\cdots w_{i_{n-1}}x_{j_n}w_{i_n}$, with $i_0,\dots,i_n\in\{1,\dots,d\}$ and $j_1,\dots,j_n\in \mathbb{N}$, which is precisely the basis given in \cite{MR25a}.

An element $f=f(x_1,\ldots,x_n)$ in $W \langle X \rangle$ is called $W$-generalized polynomial or simply generalized polynomial (if the role of $W$ is clear).

\begin{remark}\label{rem: constant term}
Note that we consider polynomials without constant terms. In fact, in Equation \eqref{eq: WX} the summation index begins at $n=1$.
\end{remark}

Let $A$ be a $W$-algebra and let $f=f(x_1,\ldots,x_n) \in W\langle X \rangle$ be a generalized polynomial. We say that $f$ is a generalized $W$-identity, or simply generalized identity, of $A$ if $f(a_1,\ldots,a_n)=0$ for any $a_1,\ldots,a_n \in A$. Equivalently, $f\in W\langle X \rangle$ is an identity if and only if it is in the kernel of every $W$-algebra homomorphism $\WX\rightarrow A$. We say that a $W$-algebra $A$ is a $\gpi$-algebra if satisfies a non trivial generalized identity. The set of all generalized identities of $A$ is denoted by $\gid(A)$ and it is a $T_W$-ideal, that is, an ideal of $W\langle X \rangle$ which is invariant under the $W$-bimodule action and under all  $W$-algebra endomorphisms of $W\langle X \rangle$. Given a non-empty set $S \subseteq W \langle X \rangle$, the class of all $W$-algebra $A$ such that $f\in\gid(A)$ for all $f \in S$ is called the variety $\mathcal{V}=\var^W(S)$ determined by $S$. The variety $\cV=\var^W(A)$ generated by the $W$-algebra $A$ is the variety determined by the generalized identities of $A$.

Since we assume that $W$ has a unit $1_W$ acting as the identity on $A$, then the ordinary identities of $A$ are contained in its generalized identities. In fact, the ordinary polynomials of the associative free algebra $\FX$ can be also considered generalized polynomials, using the identification of monomials: \[x_{i_1}x_{i_2}\cdots x_{i_n} = 1_Wx_{i_1}1_Wx_{i_2}1_W\cdots 1_Wx_{i_n}1_W\]
In this way, $\FX\subseteq \WX$ and $\id(A)\subseteq \gid(A)$.

We recall some invariants associated to any $T_W$-ideal, following the work of \cite{MR25a}.
For any $n\in \bN$, the space $gP_n=gP_n^W\subseteq\WX$ of multilinear generalized polynomials in the variables $x_1,\dots,x_n$ is defined as
\[ gP_n^W =\spn\{ w_1x_{\sigma(1)}w_2x_{\sigma(2)}w_3\cdots w_nx_{\sigma(n)}w_{n+1} \mid \sigma\in S_n, w_1,\dots,w_{n+1} \in W \}  \]
As in the ordinary case, since we assume that $F$ has characteristic zero, every $T_W$-ideal is completely determined by its multilinear generalized polynomials, that is $T_W$ is generated by $\bigcup_{n\in\bN} (T_W\cap gP_n)$. For a fixed $W$-algebra $A$ and for any $n\in\bN$, we define the space
\[ gP_n(A)=\frac{gP_n}{gP_n\cap \gid(A)}. \]
The $n$-th generalized (multilinear) codimension is $gc_n(A)=\dim gP_n(A)$. Note that $gc_n(A)$ is finite since $W$ finite dimensional. 

The symmetric group $S_n$ acts on $gP_n$ by substitution of the variables: any $\sigma\in S_n$ acts on $f(x_1,\dots,x_n)\in gP_n$ as
\[ \sigma \ast f(x_1,\dots,x_n) = f(x_{\sigma(1)},\dots,x_{\sigma(n)}) \]
The space $gP_n\cap \gid(A)$ is invariant under the $S_n$-action, so $gP_n(A)$ is an $S_n$-module, too. Recall that the finite dimensional irreducible modules of $S_n$ are the Shur modules $\cS_\lambda$, parameterized by the partitions $\lambda$ of $n$, that is by finite sequence of integers $\lambda=(\lambda_1,\ldots,\lambda_r)$ such that $\lambda_1 \geq \cdots \geq \lambda_r > 0$ and $\sum_{i=1}^r=n$. In this case we write $\lambda \vdash n$ or $|\lambda|=n$. Denote with $\chi_\lambda$ the character of $\cS_\lambda$.
This allows to define the $n$-th generalized cocharacter and the corresponding multiplicities as 
\begin{equation}\label{eq: gen_cocharacter}
	g\chi_n(A) = \sum_{\lambda\vdash n} m_\lambda \chi_\lambda
\end{equation}
Clearly, computing the cocharacter $g\chi_n(A)$ in the unit element of the symmetric group $S_n$, that is the identity permutation, we have that the $n$-th generalized multilinear codimension of $A$ corresponds to \begin{equation}\label{eq: gen_cod_formula}
	gc_n(A) =  \sum_{\lambda \vdash n}m_\lambda d_\lambda 
\end{equation}
where $d_\lambda= \dim \cS_\lambda$ is the dimension of the Shur module $\cS_\lambda$. We also define the $n$-th generalized colength as
\[ gl_n(A) = \sum_{\lambda \vdash n} m_\lambda \]

\section{Hilbert series of the relatively free $W$-algebra}

In this section we introduce the relatively free $W$-algebra and the corresponding Hilbert series. As in the classical case, we show its relationship with the generalized cocharacters.

Let $W\langle x_1,\dots,x_k\rangle\subseteq\WX$ be the $W$-subalgebra of $\WX$ generated by $x_1,\dots,x_k$. If $A$ is a $W$-algebra, the relatively free $W$-algebra of $\var^W(A)$ of rank $k\in \bN$ is 
\[ \F_k(A)=\frac{W\langle x_1,\dots,x_k\rangle}{W\langle x_1,\dots,x_k\rangle\cap \gid(A)} \]

The free algebra $W\langle x_1,\dots,x_k\rangle$ of rank $k$ has a natural multigrading obtained by counting the degree of each variable $x_1,\dots,x_k$ separately. More precisely, the generalized polynomial $f\in W\langle x_1,\dots,x_k\rangle$ has homogeneous multidegree $\underline{\alpha}=(\alpha_1,\dots,\alpha_k)\in \bN^k$ if the variable $x_i$ occurs $\alpha_i$ times in each monomial of $f$. Since the ideal $W\langle x_1,\dots,x_k\rangle\cap \gid(A)$ is multihomogeneous, the relatively free algebra $\F_k(A)$ inherits the multigrading. For each $\underline{\alpha}\in \bN^k$, let $V_k^{(\underline{\alpha})}\in W\langle x_1,\dots,x_k\rangle$ be the space of homogeneous generalized polynomials of multidegree $\underline{\alpha}$, and define
\[ V_k^{(\underline{\alpha})}(A)= \frac{V_k^{(\underline{\alpha})}}{V_k^{(\underline{\alpha})}\cap \gid(A)}\]
Note that $V_k^{(1,\dots,1)}(A)=gP_k(A)$. The $k$-th Hilbert series of $\F_k(A)$ is the formal power series:
\[ \Hilb(A; t_1,\dots,t_k) = \sum_{\alpha_1,\dots,\alpha_k\in\bN} \dim V_k^{(\underline{\alpha})}(A)\  t_1^{\alpha_1} t_2^{\alpha_2}  \cdots t_k^{\alpha_k} \]

The space of homogeneous generalized polynomials of degree $n$ in $k$ variables is 
\[ W_k^{(n)} = W^{(n)}\langle X\rangle\cap W\langle x_1,\dots,x_k\rangle = \bigoplus_{\substack{\alpha_1,\dots,\alpha_k\in\bN \\ \alpha_1+\dots+\alpha_k = n}} V_k^{(\underline{\alpha})} \]

For a fixed $W$-algebra $A$ and any $k,n\in\bN$, define
\[ W_k^{(n)}(A) = \frac{W_k^{(n)}}{W_k^{(n)}\cap \gid(A)} \]
Then, the $n$-th generalized homogeneous codimension is $gC_k^n (A)= \dim  W_k^{(n)}(A)$.

The general linear group $GL_k(F)$ acts by automorphisms on $W\langle x_1,\dots, x_k\rangle$. In fact, each $u=(u_{ij})_{i,j\leq k}\in GL_k(F)$ naturally acts on $\spn\{x_1,\dots,x_k\}$ according to
\[ u \ast x_j = \sum_{i=1}^k u_{ij}x_i  \]
and this action is naturally extended to each $f(x_1,\dots,x_k)\in W\langle x_1,\dots, x_k\rangle$ according to
\[ u \ast f(x_1,\dots,x_k) = f(u\ast x_1,\dots, u\ast x_k) \]
Each homogeneous space $W^{(n)}_k$ is stable under the $GL_k(F)$-action and the same holds for $W^{(n)}_k\cap \gid(A)$, so $W^{(n)}_k(A)$ is a $GL_k(F)$-module and we denote its character with $\phi_k^n(A)$. 

Recall, that the character $\chi_V$ of a finite dimensional polynomial representation $V$ of $GL_k(F)$ is an invariant function, that is there exists a polynomial $p(t_1,\dots,t_k)\in F[t_1,\dots,t_k]$ such that the character computed in $u\in GL_k(F)$ is $\chi_V(u)=p(\lambda_1\dots,\lambda_k)$, where $\lambda_1\dots,\lambda_k$ are the eigenvalues of $u$ counted with multiplicities. Moreover, $p$ is a symmetric polynomial, that is, it is invariant under permutation of its variables. The space of symmetric polynomials in $k$ variables is denoted with $\Sym^k$ and it is a subalgebra of $F[t_1,\dots,t_k]$. Therefore, with abuse  of language, we identify the character of a finite dimensional representations of $GL_k(F)$ with the corresponding symmetric polynomial in $\Sym^k$.
The finite dimensional irreducible representations of $GL_k(F)$ are parameterized by the partitions $\lambda$ of height $h(\lambda)$ at most $k$. To each partition $\lambda$ with $h(\lambda)\leq k$ corresponds the Weyl module $\cW^k_\lambda$ whose character is the Shur polynomial $s^{k}_\lambda(t_1,\dots,t_k)$.

Moreover, recall that a polynomial representation $V$ of $GL_k(F)$ has homogeneous degree $n\in\bN$ if for each scalar matrix $\alpha I_k \in GL_k(F)$, $\alpha \in F^\times$, and each $v\in V$ we have $\alpha I_k\ast v = \alpha^n v$. Every submodule of a $GL_k(F)$-module of homogeneous degree $n$ has homogeneous degree $n$, too. Note that $\cW^k_\lambda$ has homogeneous degree $|\lambda|$ and that $W^{(n)}_k(A)$ has homogeneous degree $n$. Therefore, we can write $\phi_k^n(A)$, i.e. the $n$-th generalized homogeneous cocharacter in $k$ variables, as \begin{equation}\label{eq: gen_coch_temp}
	\phi_k^n(A) = \sum_{\substack{\lambda\vdash n \\ h(\lambda)\leq k}} m'_{\lambda,k} s^k_\lambda(t_1,\dots,t_k)
\end{equation}
for some multiplicities $m'_{\lambda,k}\in F$. In the following, we prove that the multiplicities we just defined are actually equal to the multiplicities $m_\lambda$ defined in Equation \eqref{eq: gen_cocharacter}. 

We first need to recall an explicit construction of the irreducible representations of the symmetric group and of the general linear group. Let $\lambda\vdash n$ and consider a Young Tableaux $T_\lambda$ of shape $\lambda$, that is a filling of the Young diagram of shape $\lambda$ with the numbers $1,\dots,n$. The element $e_{T_\lambda}\in F[S_n]$ is defined as \begin{equation}\label{eq: e_T}
    e_{T_\lambda}=\sum_{\substack{\sigma \in R_{T_\lambda} \\ \tau \in C_{T_\lambda}}} (-1)^\tau \sigma \tau 
\end{equation}
where $R_{T_\lambda}$ and $C_{T_\lambda}$ denote the row-stabilizer and the column-stabilizer of $T_\lambda$, respectively. Then $F[S_n]e_{T_\lambda}$ is a minimal left ideal in $F[S_n]$ and is isomorphic to the Shur module $\cS_\lambda$.

Let $U$ be a $k$-dimensional vector space on which $GL_k(F)$ acts naturally. Then, $GL_k(F)$ acts on left on $U^{\otimes n}$ according to 
\[u\ast (v_1\otimes\cdots\otimes v_n)= (u*v_1)\otimes\cdots\otimes(u*v_n)\]
Moreover, $S_n$ acts on the right on $U^{\otimes n}$ by permuting places, that is 
\[(v_1\otimes\cdots\otimes v_n) \ast \sigma = v_{\sigma(1)}\otimes\cdots\otimes v_{\sigma(n)}\]
The two actions centralize each other. The Weyl module $\cW_\lambda^k$ is isomorphic to $U^{\otimes n}*e_{T_\lambda}$, for any Young Tableaux $T_\lambda$ of shape $\lambda\vdash n$.

We can now prove a Lemma that generalizes the techniques used in \cite[Lemma 7]{Ber96}. For any $GL_k(F)$-module $V$ of homogeneous degree $n\leq k$, we define the multilinear part $V^{\mult}$ as
\[ V^{\mult} = \{ v\in V\mid \diag(\alpha_1,\dots,\alpha_n,1,\dots,1)\ast v = \alpha_1\cdots\alpha_n v\quad \forall \alpha_1,\dots,\alpha_k\in F^\times \} \]
We embed $S_n$ in $GL_k(F)$ using the permutation matrices acting on the first $n$ rows. Then, $V^{\mult}$ is stable under the action of $S_n$, so it is a $S_n$-module.

\begin{lemma}\label{lem: mult}
	Let $k\geq n$. Then, \begin{itemize}
		\item $(V\oplus W)^{\mult} \cong_{S_n} V^{\mult} \oplus W^{\mult}$, for any $GL_k(F)$-modules $V$ and $W$ of homogeneous degree $n$.
		\item $( \cW_\lambda^k )^{\mult} \cong  \cS_\lambda$ as $S_n$-modules, for any $\lambda\vdash n$.
	\end{itemize}
\end{lemma}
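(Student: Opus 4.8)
The plan is to treat the two bullets separately: the first is formal, and the second is the substantive one. Throughout, set $T=\{\diag(\alpha_1,\dots,\alpha_n,1,\dots,1):\alpha_i\in F^\times\}\subseteq GL_k(F)$, which makes sense since $k\ge n$. By definition $V^{\mult}$ is the subspace of $V$ on which every $t\in T$ acts by the scalar $\alpha_1\cdots\alpha_n$, i.e.\ a common eigenspace for the commuting family $T$. For the first bullet, since $T$, and likewise the permutation matrices realizing $S_n\hookrightarrow GL_k(F)$, act on $V\oplus W$ preserving the two summands, this common eigenspace is exactly $V^{\mult}\oplus W^{\mult}$ and the decomposition is $S_n$-equivariant; nothing further is needed.

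For the second bullet I would proceed as follows. Fix a basis $u_1,\dots,u_k$ of $U$ so that $GL_k(F)$ acts on $U$ in the standard way and the copy of $S_n$ inside $GL_k(F)$ sends $u_j\mapsto u_{\pi(j)}$ for $j\le n$ and fixes $u_j$ for $j>n$. A short weight computation shows that a basis tensor $u_{i_1}\otimes\cdots\otimes u_{i_n}$ lies in $(U^{\otimes n})^{\mult}$ precisely when each of $1,\dots,n$ occurs exactly once among $i_1,\dots,i_n$ (here $k\ge n$ guarantees such tensors exist); hence $\tau\mapsto u_{\tau(1)}\otimes\cdots\otimes u_{\tau(n)}$ is a vector space isomorphism $F[S_n]\cong(U^{\otimes n})^{\mult}$. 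Under it, the restriction to $S_n$ of the $GL_k(F)$-action becomes left multiplication on $F[S_n]$, while the place-permutation right action of $S_n$ becomes right multiplication. In particular, since the place-permutation action centralizes $GL_k(F)$, the operator $x\mapsto x\ast e_{T_\lambda}$ on $U^{\otimes n}$ is $GL_k(F)$-equivariant, hence commutes with $T$ and therefore preserves every $T$-weight space of $U^{\otimes n}$.

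The key step is then the identity
\[
(\cW_\lambda^k)^{\mult}=(U^{\otimes n}\ast e_{T_\lambda})^{\mult}=(U^{\otimes n})^{\mult}\ast e_{T_\lambda}.
\]
The inclusion $\supseteq$ follows once we know $\ast e_{T_\lambda}$ preserves weight spaces; for $\subseteq$, write $x$ in the left-hand side as $x=y\ast e_{T_\lambda}$ with $y\in U^{\otimes n}$, decompose $y$ into $T$-weight components, and note that only the multilinear component of $y$ contributes to $x$, so $x\in(U^{\otimes n})^{\mult}\ast e_{T_\lambda}$. Transporting through the isomorphism above, $(U^{\otimes n})^{\mult}\ast e_{T_\lambda}$ corresponds to the left ideal $F[S_n]e_{T_\lambda}$ of $F[S_n]$, which is stable under the relevant $S_n$-action (left multiplication) and is isomorphic to $\cS_\lambda$, as recalled in the text. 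This yields $(\cW_\lambda^k)^{\mult}\cong_{S_n}\cS_\lambda$.

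The only genuinely delicate point is bookkeeping rather than mathematics: $U^{\otimes n}$ carries two a priori different $S_n$-actions — the place-permutation action used to define $e_{T_\lambda}$, and the restriction of the $GL_k(F)$-action along the permutation-matrix embedding used to define $V^{\mult}$ — and one must be careful to check that under the identification $(U^{\otimes n})^{\mult}\cong F[S_n]$ the former is the right regular action and the latter the left regular action (so that $F[S_n]e_{T_\lambda}$ really is an $S_n$-submodule for the action we care about). Once this is pinned down, the proof reduces to the single observation that a $GL_k(F)$-equivariant endomorphism of $U^{\otimes n}$ respects the $T$-weight decomposition.
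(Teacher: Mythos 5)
Your argument is correct and follows essentially the same route as the paper: identify $(U^{\otimes n})^{\mult}$ with $F[S_n]$ and run the chain $(\cW_\lambda^k)^{\mult}\cong(U^{\otimes n})^{\mult}\ast e_{T_\lambda}\cong F[S_n]e_{T_\lambda}\cong\cS_\lambda$. You simply make explicit the steps the paper leaves implicit, namely that $\ast\, e_{T_\lambda}$ preserves $T$-weight spaces (giving the middle equality) and that the two $S_n$-actions become left and right regular multiplication under the identification.
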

\begin{proof}
	The first part of the statement is clear. For the second part, consider a $k$-dimensional vector space $U=\spn\{e_1,\dots,e_k\}$ and its tensor power $U^{\otimes n}$ with left $GL_k(F)$-action and right $S_n$-action, as above.
	Note that the multilinear part of $U^{\otimes n}$ is given by
	\[ (U^{\otimes n})^{\mult} = \spn\{ e_{\sigma(1)}\otimes\cdots\otimes e_{\sigma(n)} \mid \sigma \in S_n  \} \]
	Therefore, $(U^{\otimes n})^{\mult} \cong F[S_n]$ as $S_n$ bimodule. Let $\lambda\vdash n$ and consider a Young Tableaux $T_\lambda$ of shape $\lambda$ and the corresponding minimal idempotent $e_{T_\lambda} \in F[S_n]$.
	We have the following isomorphisms of left $S_n$ modules:
	\[( \cW_\lambda^k )^{\mult} \cong  (U^{\otimes n}*e_{T_\lambda})^{\mult} \cong  (U^{\otimes n})^{\mult}*e_{T_\lambda}  \cong  F[S_n]e_{T_\lambda} \cong \cS_\lambda\] 
\end{proof}

The next theorem is a straightforward generalization of the arguments of \cite{Ber82} concerning ordinary polynomial identities.

\begin{theorem}\label{th: Hilbert}
	Let $A$ be a $W$-algebra with generalized cocharacter sequence \begin{equation}\label{eq: gen_coch}
		g\chi_n(A) = \sum_{\lambda\vdash n} m_\lambda \chi_\lambda
	\end{equation} 
	Then, for each $k\in\bN$, the generalized homogeneous cocharacters sequence of $A$ in $k$ variables corresponds to 
	\begin{equation}\label{eq: hom_gen_coch}
		\phi_k^n(A) = \sum_{\substack{\lambda\vdash n \\ h(\lambda)\leq k}} m_{\lambda} s^k_\lambda(t_1,\dots,t_k)
	\end{equation}
	Moreover, the Hilbert series of $F_k(A)$ in corresponds to 
	\begin{equation}\label{eq: Hilb_symmetric}
		\Hilb(A;t_1,\dots,t_k) = \sum_{n=1}^\infty \phi_k^n(A) = \sum_{h(\lambda)\leq k} m_\lambda s^k_\lambda(t_1,\dots,t_k)
	\end{equation}
\end{theorem}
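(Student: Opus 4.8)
The plan is to reduce everything to Lemma~\ref{lem: mult} by passing between the $S_n$-action on multilinear polynomials and the $GL_k(F)$-action on homogeneous polynomials. First I would fix $k$ and consider, for $n \leq k$, the $GL_k(F)$-module $W_k^{(n)}(A)$ with character $\phi_k^n(A) = \sum_{\lambda \vdash n,\, h(\lambda)\leq k} m'_{\lambda,k}\, s^k_\lambda$. Applying the multilinear-part functor $(-)^{\mult}$, Lemma~\ref{lem: mult} gives $\big(W_k^{(n)}(A)\big)^{\mult} \cong_{S_n} \sum_{\lambda \vdash n,\, h(\lambda)\leq k} m'_{\lambda,k}\, \cS_\lambda$. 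The key identification is that the multilinear part of $W_k^{(n)}$ is exactly $gP_n$ (the multilinear generalized polynomials in $x_1,\dots,x_n$, sitting inside the homogeneous degree-$n$ polynomials in $x_1,\dots,x_k$ as those multihomogeneous of multidegree $(1,\dots,1,0,\dots,0)$), and that the $S_n$-action on $gP_n$ by place-permutation coincides with the restricted $GL_k(F)$-action via permutation matrices. Moreover, since $\gid(A)$ is multihomogeneous, $\big(W_k^{(n)}(A)\big)^{\mult} = gP_n/(gP_n \cap \gid(A)) = gP_n(A)$ as $S_n$-modules, whose character is $g\chi_n(A) = \sum_{\lambda\vdash n} m_\lambda \chi_\lambda$. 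Comparing multiplicities of $\cS_\lambda$ on both sides yields $m'_{\lambda,k} = m_\lambda$ for all $\lambda \vdash n$ with $h(\lambda)\leq k$, which is \eqref{eq: hom_gen_coch} in the range $n \leq k$.

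Next I would handle $n > k$. Here Lemma~\ref{lem: mult} does not directly apply, but the conclusion $m'_{\lambda,k} = m_\lambda$ for $h(\lambda)\leq k$ should still follow by a stabilization argument: realize $W_k^{(n)}(A)$ as a $GL_k(F)$-submodule of the restriction of $W_{k'}^{(n)}(A)$ for $k' \geq n$ (the inclusion $W\langle x_1,\dots,x_k\rangle \hookrightarrow W\langle x_1,\dots,x_{k'}\rangle$ is $GL_k(F)$-equivariant for the block-diagonal embedding $GL_k \hookrightarrow GL_{k'}$, and intersecting with $\gid(A)$ is compatible). The multiplicity of $s^k_\lambda$ in $\phi_k^n(A)$ is computed by restricting $s^{k'}_\lambda$ to $GL_k(F)$: one has $s^{k'}_\lambda(t_1,\dots,t_k,0,\dots,0) = s^k_\lambda(t_1,\dots,t_k)$ if $h(\lambda)\leq k$ and $=0$ otherwise, and restrictions of Weyl modules with more rows only contribute Schur polynomials $s^k_\mu$ with $h(\mu) \leq k$ that are again governed by the larger multiplicities. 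The cleanest formulation is that the branching is "positivity-preserving and stable," so the coefficient of $s^k_\lambda$ in $\phi_k^n(A)$ equals $m'_{\lambda,k'} = m_\lambda$ (using the $n\leq k'$ case already proved). This gives \eqref{eq: hom_gen_coch} in full generality.

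Finally, for the Hilbert series, I would observe that $\Hilb(A;t_1,\dots,t_k) = \sum_{\underline{\alpha}\in\bN^k} \dim V_k^{(\underline\alpha)}(A)\, t_1^{\alpha_1}\cdots t_k^{\alpha_k}$, and that this is precisely the sum over $n$ of the (polynomial) characters $\phi_k^n(A)$: indeed $W_k^{(n)}(A) = \bigoplus_{|\underline\alpha| = n} V_k^{(\underline\alpha)}(A)$, and evaluating the character $\phi_k^n(A)$ at the diagonal matrix $\diag(t_1,\dots,t_k)$ recovers $\sum_{|\underline\alpha|=n} \dim V_k^{(\underline\alpha)}(A)\, \underline t^{\underline\alpha}$, since a homogeneous polynomial $GL_k(F)$-module decomposes as a direct sum of its weight spaces, and the weight space for the weight $\underline\alpha$ in $W_k^{(n)}$ is exactly $V_k^{(\underline\alpha)}$ (the multigrading is the weight-space grading). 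Substituting \eqref{eq: hom_gen_coch} and summing the Schur polynomials over all $n$ gives $\Hilb(A;t_1,\dots,t_k) = \sum_{h(\lambda)\leq k} m_\lambda s^k_\lambda(t_1,\dots,t_k)$, as claimed.

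I expect the main obstacle to be the $n > k$ case: making the stabilization/branching argument precise requires care that the Schur-positivity of $\phi_k^n(A)$ is preserved under restriction and that no "new" multiplicities are introduced, i.e. that the coefficient of $s^k_\lambda$ really stabilizes to $m_\lambda$ rather than picking up contributions from $s^{k'}_\mu$ with $\mu \supsetneq \lambda$ having extra short columns. The identification of the multigrading with the weight-space decomposition (used for the Hilbert series) is routine once one checks the $GL_k(F)$-action on monomials scales each $x_i$-slot by $t_i$, so the character evaluated at $\diag(t_1,\dots,t_k)$ is a generating function for multidegrees; but one must be slightly careful that $W$ sits on the fixed (weight-zero) tensor factors and does not interfere with the weight decomposition, which follows because $GL_k(F)$ acts trivially on the $W$-slots.
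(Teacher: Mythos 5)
Your proposal follows essentially the same route as the paper: for $k\geq n$ you compare the multilinear part of $W_k^{(n)}(A)$, identified with $gP_n(A)$, against Lemma \ref{lem: mult} to get $m'_{\lambda,k}=m_\lambda$, and you obtain the Hilbert series by evaluating the $GL_k(F)$-character at $\diag(t_1,\dots,t_k)$ after matching the weight-space decomposition with the multigrading, exactly as in the paper's proof. The only difference is that you make explicit the $n>k$ stabilization step (embedding $W_k^{(n)}(A)$ into $W_{k'}^{(n)}(A)$ for $k'\geq n$ as the span of weight vectors supported on the first $k$ variables, and using $s^{k'}_\lambda(t_1,\dots,t_k,0,\dots,0)=s^k_\lambda(t_1,\dots,t_k)$ for $h(\lambda)\leq k$ and $0$ otherwise), a point the paper leaves implicit after treating only $k\geq n$; your argument for it is correct and resolves your own stated worry, since truncating the character introduces no contributions from partitions of height greater than $k$.
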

\begin{proof}
 	
	Let $k\geq n$ and let $m'_{\lambda,k}$, for $\lambda\vdash n$, $ h(\lambda)\leq k$, be the multiplicities of the $n$-th generalized cocharacter of $A$ as in Equation \eqref{eq: gen_coch_temp}.
	By Lemma \ref{lem: mult},
	\[  (W_k^{(n)}(A))^{\mult } \cong \sum_{\substack{\lambda\vdash n \\ h(\lambda)\leq k}} m'_{\lambda,k} (\cW_\lambda^k)^{\mult} \cong \sum_{\lambda\vdash n } m'_{\lambda,k} \cS_\lambda \]
	On the other hand, 
	\[ (W_k^{(n)}(A))^{\mult } = P_n(A) \cong \sum_{\lambda\vdash n} m_\lambda \cS_\lambda \]
	This proves that $m'_{\lambda,k} = m_\lambda$ for each $\lambda\vdash n$ and $k\geq n$. So, Equation \eqref{eq: hom_gen_coch} is proved.
	
	Now, let $D=\diag(t_1,\dots,t_k)\in GL_k(F)$ be the diagonal matrix with coefficients $t_1,\dots,t_k\in F^\times$. The trace of the action of $D$ on $W_k^{(n)}(A)$ is, on one hand, 
	\[ \sum_{\substack{\alpha_1,\dots,\alpha_k\in\bN \\ \alpha_1+\dots+\alpha_k = n}} \dim V^{(\underline{\alpha})}(A)\  t_1^{\alpha_1} t_2^{\alpha_2}  \cdots t_k^{\alpha_k} \]
	and, on the other hand, corresponds to the cocharacter $\phi_k^n(A)$ computed in $D$. This proves Equation \eqref{eq: Hilb_symmetric}.
\end{proof}

The previous theorem shows that the Hilbert series $\Hilb(A;t_1,\dots,t_k)$ of $\F_k(A)$ is symmetric in the variables $t_1,\dots,t_k$. It also gives the analogue of Equation \eqref{eq: gen_cod_formula} for generalized homogeneous codimensions:

\begin{corollary}
	The generalized homogeneous codimensions sequence of $A$ in $k$ variables corresponds to
	\begin{equation}\label{eq: gen_hom_cod_formula}
		gC_k^n(A) = \sum_{\substack{\lambda\vdash n \\ h(\lambda)\leq k}} m_{\lambda} \dim \cW_\lambda^k  = \sum_{\substack{\lambda\vdash n \\ h(\lambda)\leq k}} m_{\lambda} s^k_\lambda(1,\dots,1) 
	\end{equation}
\end{corollary}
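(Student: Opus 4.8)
The plan is to obtain the corollary by specializing the character identity of Theorem \ref{th: Hilbert} at the identity matrix $I_k \in GL_k(F)$. First I would recall that, by definition, $gC_k^n(A) = \dim W_k^{(n)}(A)$, and that for any finite dimensional representation the value of its character at the identity element of the group equals the dimension of the representation. Applied to the $GL_k(F)$-module $W_k^{(n)}(A)$, whose character is $\phi_k^n(A)$, this gives $\phi_k^n(A)(I_k) = \dim W_k^{(n)}(A) = gC_k^n(A)$.

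Next I would translate this through the identification, used in the excerpt, of the character of a polynomial $GL_k(F)$-representation with a symmetric polynomial in the eigenvalues of the group element. Since the eigenvalues of $I_k$, counted with multiplicity, are $(1,\dots,1)$, evaluating a character at $I_k$ corresponds precisely to substituting $t_1 = \dots = t_k = 1$ in the associated symmetric polynomial. Applying this substitution to Equation \eqref{eq: hom_gen_coch} of Theorem \ref{th: Hilbert} yields
\[ gC_k^n(A) \;=\; \phi_k^n(A)(I_k) \;=\; \sum_{\substack{\lambda\vdash n \\ h(\lambda)\leq k}} m_\lambda\, s^k_\lambda(1,\dots,1). \]

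Finally I would invoke that $s^k_\lambda(t_1,\dots,t_k)$ is, by construction, the character of the Weyl module $\cW_\lambda^k$, so the same ``character at the identity'' principle gives $s^k_\lambda(1,\dots,1) = \dim \cW_\lambda^k$; combining this with the previous display produces the full chain of equalities in the statement. I do not expect any genuine obstacle here: the corollary is a direct numerical evaluation of Theorem \ref{th: Hilbert}, and the only point deserving a line of care is checking that the abuse of language identifying characters with symmetric polynomials is compatible with evaluation at $I_k$, which is immediate from the observation about the eigenvalues of $I_k$.
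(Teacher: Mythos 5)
Your proposal is correct and is exactly the paper's argument: evaluate the homogeneous cocharacter identity of Theorem \ref{th: Hilbert} at the identity matrix of $GL_k(F)$, using that a character at the identity gives the dimension and that the eigenvalues of $I_k$ are all equal to $1$. The paper states this in one line; your extra care about the identification of characters with symmetric polynomials is harmless elaboration of the same route.
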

\begin{proof}
	It is enough to use Equation \eqref{eq: hom_gen_coch} to compute the cocharacter $\phi_k^n(A)$ in the unit element of $GL_k(F)$, that is the identity matrix.
\end{proof}

We end this section by giving some examples of generalized Hilbert series. The first example is the free $W$-algebra itself. Note that, in this case, $\F_k(\WX)=W\langle x_1,\dots,x_k\rangle$, since $\gid(\WX)=0$.

\begin{theorem}
	Let $W$ be a unitary associative algebra of dimension $d$. Then, the Hilbert series of the free $W$-algebra of rank $k$ is \begin{gather*}
		\Hilb(\WX; t_1,\dots,t_k) = \frac{d^2 (t_1+\ldots+t_k)}{1-d (t_1+\ldots +t_k)}
	\end{gather*}
\end{theorem}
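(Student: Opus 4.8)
The plan is to carry out a direct dimension count using the monomial basis of $\WX$ recalled above. Since $\gid(\WX)=0$, the relatively free algebra of $\var^W(\WX)$ of rank $k$ is $W\langle x_1,\dots,x_k\rangle$ itself, so $V_k^{(\underline{\alpha})}(\WX)=V_k^{(\underline{\alpha})}$ for every $\underline{\alpha}\in\bN^k$, and the whole problem reduces to computing $\dim V_k^{(\underline{\alpha})}$, the dimension of the space of homogeneous generalized polynomials of multidegree $\underline{\alpha}$ in the variables $x_1,\dots,x_k$.

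First I would fix a basis $w_1,\dots,w_d$ of $W$. By the basis description recalled after \eqref{eq: WX}, the monomials
\[ w_{i_0}x_{j_1}w_{i_1}x_{j_2}\cdots w_{i_{n-1}}x_{j_n}w_{i_n},\qquad i_0,\dots,i_n\in\{1,\dots,d\},\ \ j_1,\dots,j_n\in\{1,\dots,k\}, \]
form a basis of $W\langle x_1,\dots,x_k\rangle$, and such a monomial has multidegree $\underline{\alpha}$ precisely when the word $x_{j_1}x_{j_2}\cdots x_{j_n}$ contains the letter $x_i$ exactly $\alpha_i$ times (so that $n=|\underline{\alpha}|:=\alpha_1+\dots+\alpha_k$). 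Hence counting the basis monomials of multidegree $\underline{\alpha}$ amounts to choosing the $(n+1)$-tuple $(w_{i_0},\dots,w_{i_n})$ freely, giving $d^{\,n+1}$ choices, and then choosing an arrangement of the multiset $\{x_1^{\alpha_1},\dots,x_k^{\alpha_k}\}$, giving $\binom{n}{\alpha_1,\dots,\alpha_k}$ choices. This yields $\dim V_k^{(\underline{\alpha})}=d^{\,|\underline{\alpha}|+1}\binom{|\underline{\alpha}|}{\alpha_1,\dots,\alpha_k}$, which is $0$ when $\underline{\alpha}=(0,\dots,0)$, consistently with Remark \ref{rem: constant term}.

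Substituting this into the definition of $\Hilb$, grouping monomials by total degree $n\geq 1$, and applying the multinomial theorem gives
\begin{align*}
\Hilb(\WX;t_1,\dots,t_k)
=\sum_{n\geq 1} d^{\,n+1}\sum_{\substack{\underline{\alpha}\in\bN^k\\ |\underline{\alpha}|=n}}\binom{n}{\alpha_1,\dots,\alpha_k}t_1^{\alpha_1}\cdots t_k^{\alpha_k}
=\sum_{n\geq 1} d^{\,n+1}(t_1+\dots+t_k)^n .
\end{align*}
Writing $s=t_1+\dots+t_k$, this is the geometric series $d\sum_{n\geq 1}(ds)^{n}=\dfrac{d^{2}s}{1-ds}$, which is exactly the asserted expression. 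I do not expect a genuine obstacle here; the only step needing care is the combinatorial bookkeeping in the dimension count --- in particular that each of the $n+1$ slots for elements of $W$ contributes an independent factor $d$, while the $n$ variable slots contribute the multinomial coefficient --- after which everything is a formal manipulation of power series. As a cross-check, one can also avoid multidegrees altogether by reading each basis monomial as an initial factor $w$ followed by $n$ blocks $x_jw$: the initial factor contributes $d$ and each block contributes $d(t_1+\dots+t_k)$, and summing over $n\geq 1$ reproduces the same geometric series.
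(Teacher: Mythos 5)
Your proposal is correct: the dimension count $\dim V_k^{(\underline{\alpha})}=d^{\,|\underline{\alpha}|+1}\binom{|\underline{\alpha}|}{\alpha_1,\dots,\alpha_k}$ follows from the monomial basis, the multinomial theorem collapses the sum over $\underline{\alpha}$ with $|\underline{\alpha}|=n$ to $d^{\,n+1}(t_1+\dots+t_k)^n$, and the geometric series gives the stated closed form. The paper reaches the same intermediate expression $d^{\,n+1}(t_1+\dots+t_k)^n$ by a slightly different packaging: it observes that, by the construction in Equation \eqref{eq: W^n}, $W_k^{(n)}$ is as a $GL_k(F)$-module a tensor product of $n$ copies of the standard module $V_k$ (character $t_1+\dots+t_k$) and $n+1$ copies of $W$ with trivial action (each contributing a factor $d$), and then sums the characters using the identification of the Hilbert series with $\sum_n \phi_k^n$ from Theorem \ref{th: Hilbert}. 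Your version works directly from the definition of $\Hilb$ via a basis count and the multinomial theorem, so it does not need the character-theoretic framework at all; what the paper's phrasing buys is that the same one-line tensor-product observation immediately gives the character, and hence the Schur-function data, not just the dimension generating function. Both arguments are complete and the bookkeeping in yours (the $n+1$ independent $W$-slots versus the $n$ variable slots) is exactly right.
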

\begin{proof}
	The standard $GL_k(F)$-module $V_k=\spn\{x_1,\dots,x_k\}$ has character $s^k_{(1)}= t_1+\dots + t_k$.
	By Equation \eqref{eq: W^n}, the space $W_k^{(n)}$ of degree $n$ homogeneous polynomials in $k$ variables is isomorphic, as a $GL_k(F)$-module, to the product of $n$ copies of $V_k$ and to $n+1$ copies of $W$, on which $GL_k(F)$ acts trivially. So the character of $W_k^{(n)}$ is $d^{n+1}(t_1+\dots + t_k)^n$ for any $k,n\geq 1$. Therefore,
	\[ \Hilb(\WX; t_1,\dots,t_k) = d\sum_{n=1}^{\infty} d^{n}(t_1+\dots + t_k)^n = \frac{d^2 (t_1+\ldots+t_k)}{1-d (t_1+\ldots +t_k)} \]
\end{proof}

In \cite{MR25a}, the authors computed the generalized cocharacter sequences of $UT_2$ with three different $W$-actions. We use their results to give the corresponding Hilbert series. We first recall the following result of \cite{Reg94} regarding the Young derived series:

\begin{proposition}[\cite{Reg94}]\label{prop: young}
	Assume that
	\[ \left(\prod_{i=1}^{k}\frac{1}{(1-t_i)}\right) \sum_{n=0}^{\infty}\sum_{\mu\vdash n} \alpha_\mu s^k_\mu = \sum_{n=0}^{\infty}\sum_{\lambda\vdash n} m_\lambda s^k_\lambda  \]
	Then, for each partition $\lambda=(\lambda_1,\dots,\lambda_k)$, we have that
	\[ m_\lambda = \sum_\mu \alpha_\mu \]
	where the sum runs trough all the partitions $\mu=(\mu_1,\dots,\mu_k)$ such that 
	\[ \lambda_{1} \geq \mu_1 \geq \lambda_{2} \geq \mu_2 \geq\dots\geq \lambda_{k} \geq \mu_k \]
\end{proposition}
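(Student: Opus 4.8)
The plan is to deduce the identity from Pieri's rule for Schur polynomials together with the linear independence of Schur polynomials in $\Sym^k$. Throughout, write $\mu\preceq\lambda$ for the interlacing condition $\lambda_1\ge\mu_1\ge\lambda_2\ge\mu_2\ge\cdots\ge\lambda_k\ge\mu_k$, all partitions being padded by zeros to have exactly $k$ parts. The first step is to expand the factor $\prod_{i=1}^{k}(1-t_i)^{-1}$ in the Schur basis. From the Cauchy-type identity $\prod_{i=1}^{k}(1-t_i x)^{-1}=\sum_{n\ge 0}h_n(t_1,\dots,t_k)\,x^n$, where $h_n$ denotes the complete homogeneous symmetric polynomial, evaluated at $x=1$, together with the equality $h_n=s^k_{(n)}$ of $h_n$ with the Schur polynomial of the one-row partition $(n)$, one obtains
\[ \prod_{i=1}^{k}\frac{1}{1-t_i}=\sum_{n=0}^{\infty}s^k_{(n)}(t_1,\dots,t_k). \]

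Next I would apply Pieri's rule: for every partition $\mu$ with $h(\mu)\le k$ and every $n\ge 0$,
\[ s^k_{(n)}\,s^k_\mu=\sum_{\lambda}s^k_\lambda, \]
the sum running over partitions $\lambda\supseteq\mu$ with $h(\lambda)\le k$ such that $\lambda/\mu$ is a horizontal strip of size $n$ (no two of its boxes in the same column); partitions of height exceeding $k$ give no contribution, since $s^k_\lambda=0$ in $k$ variables. Recalling that $\lambda/\mu$ is a horizontal strip precisely when $\mu\preceq\lambda$, and that in that case the strip automatically has size $|\lambda|-|\mu|$, summing over all $n\ge 0$ yields, for each fixed $\mu$,
\[ \Bigl(\sum_{n=0}^{\infty}s^k_{(n)}\Bigr)\,s^k_\mu=\sum_{\mu\preceq\lambda}s^k_\lambda. \]

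Then I would substitute this into the left-hand side of the assumed identity and interchange the order of summation, which is legitimate because only finitely many terms occur in each total degree:
\[ \Bigl(\prod_{i=1}^{k}\frac{1}{1-t_i}\Bigr)\sum_{\mu}\alpha_\mu s^k_\mu=\sum_{\mu}\alpha_\mu\sum_{\mu\preceq\lambda}s^k_\lambda=\sum_{\lambda}\Bigl(\sum_{\mu\preceq\lambda}\alpha_\mu\Bigr)s^k_\lambda. \]
Comparing with $\sum_{\lambda}m_\lambda s^k_\lambda$ and invoking the linear independence of $\{s^k_\lambda:h(\lambda)\le k\}$ in $\Sym^k$ then gives $m_\lambda=\sum_{\mu\preceq\lambda}\alpha_\mu$, which is exactly the asserted formula.

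The only delicate point is the bookkeeping at the last row: one must check that a box added in row $k+1$ when forming a horizontal strip produces a partition of height $k+1$, whose Schur polynomial vanishes in $k$ variables, so that the interlacing chain $\mu\preceq\lambda$ legitimately terminates at $\lambda_k\ge\mu_k$ rather than continuing indefinitely; everything else is a routine manipulation of formal power series graded by total degree. (Equivalently, one could phrase the same computation representation-theoretically, replacing Pieri's rule by the branching rule for the induced character $\mathrm{Ind}_{S_{|\mu|}\times S_n}^{S_{|\mu|+n}}(\chi^\mu\boxtimes 1)$, which is where the name ``Young derived'' comes from.)
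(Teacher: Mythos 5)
Your argument is correct. Note that the paper itself gives no proof of this proposition: it is quoted from Regev's work on Young-derived sequences [Reg94], so there is no internal argument to compare against. Your derivation --- expanding $\prod_{i=1}^{k}(1-t_i)^{-1}=\sum_{n\ge 0}h_n=\sum_{n\ge 0}s^k_{(n)}$, applying Pieri's rule, identifying horizontal strips with the interlacing condition $\lambda_1\ge\mu_1\ge\lambda_2\ge\cdots\ge\lambda_k\ge\mu_k$, interchanging the (degreewise finite) sums, and comparing coefficients via the linear independence of the $s^k_\lambda$ with $h(\lambda)\le k$ --- is the standard proof and is essentially the combinatorial content behind Regev's ``Young-derived'' terminology, as your closing remark about the branching rule correctly observes. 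Your handling of the truncation in $k$ variables (partitions of height $k+1$ contributing $s^k_\lambda=0$, so the interlacing chain stops at $\mu_k$) is exactly the point that needs checking, and you check it.
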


We now consider the $W$-action of $UT_2$ on itself, that is $W=UT_2$ acting by left and right multiplication. As in \cite{MR25a}, we denote this case by $UT_2$ without any superscript.

\begin{theorem}
	The Hilbert series of the relatively free algebra of rank $k$ of $UT_2$ acting on itself is \begin{gather*}
		\Hilb(UT_2; t_1,\dots,t_k) = -3 +\frac{2}{(1-t_1)\cdots (1-t_k)} + \frac{t_1+\ldots +t_k +1}{(1-t_1)^2\cdots (1-t_k)^2}
	\end{gather*}
\end{theorem}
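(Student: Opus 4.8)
The plan is to combine the explicit form of the generalized cocharacter of $UT_2$ established in \cite{MR25a} with the Schur expansion of the Hilbert series from Theorem \ref{th: Hilbert}, and then to collapse each resulting family of Schur functions into a rational function by reading Proposition \ref{prop: young} from right to left. Concretely, the computation of \cite{MR25a} gives, for $n\geq 1$, a generalized cocharacter of the form
\[ g\chi_n(UT_2)=(2n+3)\,\chi_{(n)}+\sum_{\substack{p+q=n\\ p\geq q\geq 1}}3(p-q+1)\,\chi_{(p,q)}+\sum_{\substack{p+q=n-1\\ p\geq q\geq 1}}(p-q+1)\,\chi_{(p,q,1)}, \]
so that Theorem \ref{th: Hilbert} yields $\Hilb(UT_2;t_1,\dots,t_k)=\sum_{h(\lambda)\leq k}m_\lambda\, s^k_\lambda(t_1,\dots,t_k)$ with $m_\lambda$ as above; the remaining task is purely symmetric-function bookkeeping.

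I would split this sum into three blocks by writing $2n+3=2+(n+1)+n$ on the one-row partitions and $3(p-q+1)=(p-q+1)+2(p-q+1)$ on the two-row partitions. The block $2\sum_{n\geq 0}s^k_{(n)}$ equals $\frac{2}{(1-t_1)\cdots(1-t_k)}$, since $\prod_{i=1}^{k}(1-t_i)^{-1}=\sum_{n\geq 0}h_n=\sum_{n\geq 0}s^k_{(n)}$. For the block $\sum_{h(\lambda)\leq 2}(\lambda_1-\lambda_2+1)s^k_\lambda$, I would apply Proposition \ref{prop: young} with $\sum_\mu\alpha_\mu s^k_\mu=\sum_{m\geq 0}s^k_{(m)}=\prod_{i=1}^{k}(1-t_i)^{-1}$: the chain condition $\lambda_1\geq\mu_1\geq\lambda_2\geq\mu_2\geq\cdots$ forces $\lambda$ to have at most two rows and then admits exactly $\lambda_1-\lambda_2+1$ values of $\mu_1$, so this block equals $\frac{1}{(1-t_1)^2\cdots(1-t_k)^2}$. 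For the last block I would first expand, via the Pieri rule, $\frac{t_1+\dots+t_k}{(1-t_1)\cdots(1-t_k)}=s^k_{(1)}\sum_{m\geq 0}s^k_{(m)}=\sum_{m\geq 1}\bigl(s^k_{(m)}+s^k_{(m,1)}\bigr)$, and then apply Proposition \ref{prop: young} with $\alpha_{(m)}=\alpha_{(m,1)}=1$ for $m\geq 1$ and all other $\alpha_\mu=0$; a short case analysis on the admissible $\mu$ shows the resulting multiplicities are $p$ on $s^k_{(p)}$, $2(p-q+1)$ on $s^k_{(p,q)}$ (for $p\geq q\geq 1$), $p-q+1$ on $s^k_{(p,q,1)}$ (for $p\geq q\geq 1$), and $0$ otherwise, so this block equals $\frac{t_1+\dots+t_k}{(1-t_1)^2\cdots(1-t_k)^2}$.

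Adding the three blocks reproduces the multiplicity $m_\lambda$ on every nonempty partition $\lambda$, but gives the spurious coefficient $2+1+0=3$ on the empty partition (each of the first two rational functions has a nonzero constant term while $\Hilb(UT_2;t_1,\dots,t_k)$ has none, as there are no constant generalized polynomials); subtracting $3$ then gives the claimed identity
\[ \Hilb(UT_2;t_1,\dots,t_k)=-3+\frac{2}{(1-t_1)\cdots(1-t_k)}+\frac{t_1+\dots+t_k+1}{(1-t_1)^2\cdots(1-t_k)^2}. \]
I expect the only genuine obstacle to be the third block: one must check carefully that multiplying the Schur expansion of $\prod_{i=1}^{k}(1-t_i)^{-2}$ by $t_1+\dots+t_k$ and re-expanding yields precisely the partitions $(p)$, $(p,q)$ and the ``three-row foot'' $(p,q,1)$ with the multiplicities above, and that no partition with a third part $\geq 2$ or a fourth part survives --- this is exactly where the height-three partitions of the cocharacter enter. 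Everything else is a direct application of Theorem \ref{th: Hilbert} and Proposition \ref{prop: young}.
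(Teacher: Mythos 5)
Your argument is correct: the multiplicities you read off from \cite{MR25a} agree with the paper's, and the three-block regrouping $2n+3=2+(n+1)+n$, $3(\lambda_1-\lambda_2+1)=(\lambda_1-\lambda_2+1)+2(\lambda_1-\lambda_2+1)$ does recombine, after Young-deriving, into exactly the stated rational function with the spurious constant $3$ removed; your case analysis for the third block (multiplicity $p$ on $(p)$, $2(p-q+1)$ on $(p,q)$, $p-q+1$ on $(p,q,1)$, nothing else) checks out. The route differs from the paper's in the bookkeeping: the paper splits the cocharacter as $\bigl(\sum_n s^k_{(n)}\bigr)+2\cdot\bigl(\text{Young-derived of }\sum_n s^k_{(n)}\bigr)+\bigl(\text{Young-derived of }\sum_{n\geq1} s^k_{(n,1)}\bigr)-3$ and then needs the closed form $\sum_{n\geq 1}s^k_{(n,1)}=1+(s^k_{(1)}-1)\prod_i(1-t_i)^{-1}$ from \cite[Lemma 7]{BD12} to finish, whereas you bundle the one-row and $(m,1)$ families together and identify $\sum_{m\geq1}\bigl(s^k_{(m)}+s^k_{(m,1)}\bigr)=s^k_{(1)}\prod_i(1-t_i)^{-1}$ directly by the Pieri rule before applying Proposition \ref{prop: young}. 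Your version is thus slightly more self-contained (no appeal to the external identity for $\sum_n s^k_{(n,1)}$), while the paper's grouping keeps each Young-derived input a sum of single Schur functions and reuses a known generating identity; both are applications of Theorem \ref{th: Hilbert} plus Proposition \ref{prop: young} and yield the same rational expression.
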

\begin{proof}
	By \cite[Theorem 4.4]{MR25a}, the generalized cocharacter sequence $g\chi_n(UT_2)=\sum_\lambda m_\lambda \chi_\lambda$ is given by
	\[ m_\lambda = \begin{cases}
		2n+3 &\text{if } \lambda = (n),\ n\geq 1 \\
		3(q+1) &\text{if } \lambda = (p+q,p),\ p\geq1, q\geq 0, \\
		q+1 &\text{if } \lambda = (p+q,p,1),\ p\geq1, q\geq 0, \\
		0 &\text{in all other cases}
	\end{cases} \]
	Therefore, the Hilbert series of $UT_2$ is 
	\[ \Hilb(UT_2; t_1,\dots,t_k) = \sum_{n\geq 1} (2n+3) s^k_{(n)} + \sum_{\substack{p\geq 1 \\ q\geq 0}} 3(q+1) s^k_{(p+q,p)} + \sum_{\substack{p\geq 1 \\ q\geq 0}} (q+1) s^k_{(p+q,p,1)}  \]
	We rewrite it as \begin{align*}
		\Hilb(UT_2; t_1,\dots,t_k) = &\left( \sum_{n\geq 0} s^k_{(n)} \right) +  2 \left( \sum_{n\geq 0} (n+1) s^k_{(n)} + \sum_{\substack{p\geq 1 \\ q\geq 0}} (q+1) s^k_{(p+q,p)} \right) + \\
		&\left( \sum_{\substack{p\geq 1 \\ q\geq 0}} (q+1) s^k_{(p+q,p)} + \sum_{\substack{p\geq 1 \\ q\geq 0}} (q+1) s^k_{(p+q,p,1)} \right) - 3
	\end{align*}
	Then, we use Proposition \ref{prop: young} to obtain
	\[ \Hilb(UT_2; t_1,\dots,t_k) = \left(\prod_{i=1}^{\infty}\frac{1}{(1-t_i)}\right) \left( 1 + 2\sum_{n\geq 0} s^k_{(n)} + \sum_{n\geq 1} s^k_{(n,1)} \right) -3  \]
	Now, note that applying \ref{prop: young} to the constant symmetric function of value 1, we obtain:
	\[\sum_{n\geq 0} s^k_{(n)} = \prod_{i=1}^{k}\frac{1}{(1-t_i)}\]
	Moreover, by \cite[Lemma 7]{BD12} the following equality holds:
	\[ \sum_{n\geq 1} s^k_{(n,1)} = 1 + (s^k_{(1)}-1)\prod_{i=1}^{k}\frac{1}{(1-t_i)} \]
	Therefore, we have
	\[ \Hilb(UT_2; t_1,\dots,t_k) = \left(\prod_{i=1}^{k}\frac{1}{(1-t_i)}\right) \left( 1 + 2\prod_{i=1}^{k}\frac{1}{(1-t_i)} + 1 + (s^k_{(1)}-1)\prod_{i=1}^{k}\frac{1}{(1-t_i)} \right) -3\]
	and using simple algebraic manipulations we obtain the desired result.
\end{proof}

We now consider the case of $UT_2$ with the action of $W=D=\spn\{e_{11},e_{22}\}\subseteq UT_2$ by left and right multiplication and we denote this case with $UT_2^D$.

\begin{theorem}
	The Hilbert series of the relatively free algebra of rank $k$ of $UT_2^D$ is \begin{gather*}
		\Hilb(UT_2^D; t_1,\dots,t_k) = -2 +\frac{2}{(1-t_1)\cdots (1-t_k)} + \frac{t_1+\ldots +t_k }{(1-t_1)^2\cdots (1-t_k)^2}
	\end{gather*}
\end{theorem}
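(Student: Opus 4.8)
The plan is to run, essentially verbatim, the argument used above for $UT_2$ acting on itself. By Theorem \ref{th: Hilbert}, the Hilbert series of the relatively free algebra of rank $k$ is the Schur expansion $\sum_{h(\lambda)\le k} m_\lambda s^k_\lambda(t_1,\dots,t_k)$ of the generalized cocharacter sequence, so the first step is to read off that cocharacter for the diagonal action $W=D=\spn\{e_{11},e_{22}\}$ on $UT_2$ from the computations of \cite{MR25a}. I expect the multiplicities to have the same combinatorial shape as in the $UT_2$ case: $m_{(n)}=n+2$ for $n\ge 1$, $m_{(p+q,p)}=2(q+1)$ for $p\ge 1,q\ge 0$, $m_{(p+q,p,1)}=q+1$ for $p\ge 1,q\ge 0$, and $m_\lambda=0$ otherwise. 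In particular the only shapes carrying nonzero multiplicity are one-row shapes, two-row shapes, and three-row shapes with bottom row of length $1$, which is exactly the class of shapes that can be produced by the Young-derived operation of Proposition \ref{prop: young}.

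Next I would regroup the triple sum $\sum_{n\ge 1}(n+2)s^k_{(n)}+\sum_{p\ge 1,q\ge 0}2(q+1)s^k_{(p+q,p)}+\sum_{p\ge 1,q\ge 0}(q+1)s^k_{(p+q,p,1)}$ into three blocks, each of which is a Young derivative in the sense of Proposition \ref{prop: young}, up to an explicit constant correction. Concretely, the interlacing formula of that proposition sends the constant series $1$ to $\sum_{n\ge 0}s^k_{(n)}$, sends $\sum_{n\ge 0}s^k_{(n)}$ to $\sum_{n\ge 0}(n+1)s^k_{(n)}+\sum_{p\ge 1,q\ge 0}(q+1)s^k_{(p+q,p)}$, and sends $\sum_{n\ge 1}s^k_{(n,1)}$ to $\sum_{p\ge 1,q\ge 0}(q+1)s^k_{(p+q,p)}+\sum_{p\ge 1,q\ge 0}(q+1)s^k_{(p+q,p,1)}$. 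Matching coefficients shape by shape, one then checks that
\[
\Hilb(UT_2^D;t_1,\dots,t_k)=\left(\prod_{i=1}^{k}\frac{1}{1-t_i}\right)\left(1+\sum_{n\ge 0}s^k_{(n)}+\sum_{n\ge 1}s^k_{(n,1)}\right)-2,
\]
where the $-2$ absorbs the spurious degree-$0$ contributions introduced by letting the one-row sums start at $n=0$ rather than $n=1$.

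Finally I would substitute the two closed forms already invoked in the $UT_2$ proof, namely $\sum_{n\ge 0}s^k_{(n)}=\prod_{i=1}^{k}(1-t_i)^{-1}$ (itself an instance of Proposition \ref{prop: young} applied to the constant $1$) and $\sum_{n\ge 1}s^k_{(n,1)}=1+(s^k_{(1)}-1)\prod_{i=1}^{k}(1-t_i)^{-1}$ from \cite[Lemma 7]{BD12}. The bracket then collapses to $2+s^k_{(1)}\prod_{i=1}^{k}(1-t_i)^{-1}$, so that
\[
\Hilb(UT_2^D;t_1,\dots,t_k)=\frac{2}{(1-t_1)\cdots(1-t_k)}+\frac{s^k_{(1)}}{(1-t_1)^2\cdots(1-t_k)^2}-2,
\]
which is the asserted formula since $s^k_{(1)}=t_1+\dots+t_k$. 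The only genuinely delicate point is the combinatorial bookkeeping in the regrouping step — getting the constant correction ($-2$ here, versus $-3$ for $UT_2$) right and making sure that no two-row or three-row shape is double-counted across the three blocks — but this is the same routine interlacing verification as in the preceding theorem, and the rest is elementary algebra.
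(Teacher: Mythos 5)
Your proposal is correct and follows exactly the route the paper intends: it quotes the multiplicities from \cite[Theorem 5.2]{MR25a} and then repeats, with the constant correction $-2$ in place of $-3$, the Young-derived regrouping and the closed forms for $\sum_{n\ge 0}s^k_{(n)}$ and $\sum_{n\ge 1}s^k_{(n,1)}$ used in the $UT_2$ case, which is precisely what the paper's one-line proof (``proceeding as in the previous theorem'') leaves to the reader. The interlacing bookkeeping and the final algebra in your write-up check out.
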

\begin{proof}
	By \cite[Theorem 5.2]{MR25a}, the generalized cocharacter sequence $g\chi_n(UT_2^D)=\sum_\lambda m_\lambda \chi_\lambda$ is given by
	\[ m_\lambda = \begin{cases}
		n+2 &\text{if } \lambda = (n),\ n\geq 1 \\
		2(q+1) &\text{if } \lambda = (p+q,p),\ p\geq1, q\geq 0, \\
		q+1 &\text{if } \lambda = (p+q,p,1),\ p\geq1, q\geq 0, \\
		0 &\text{in all other cases}
	\end{cases} \]
	Then, proceeding as in the previous theorem we obtain the result.
\end{proof}

The last case to consider is of $UT_2$ with the action of $F(e_{11}+e_{22})\subseteq UT_2$ by left and right multiplication and we denote this case with $UT_2^F$. Note that, as observed in \cite[Section 5]{MR25a}, in this case we are dealing with ordinary polynomial identities. Therefore, the corresponding Hilbert series coincides with the one computed in \cite[Theorem 12]{BD12}, up to subtraction of the constant term (see Remark \ref{rem: constant term}). Of course, using the techniques of the previous theorems we obtain the same result.

\begin{theorem}
	The generalized Hilbert series of the relatively free algebra rank $k$ of $UT_2^F$ is \begin{gather*}
		\Hilb(UT_2^F; t_1,\dots,t_k) = -1 +\frac{2}{(1-t_1)\cdots (1-t_k)} + \frac{t_1+\ldots +t_k -1}{(1-t_1)^2\cdots (1-t_k)^2}
	\end{gather*}
\end{theorem}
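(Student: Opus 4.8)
The plan is to reduce the statement to the ordinary (non-generalized) computation for $UT_2$, which is already available in the literature. The first step, and the only one requiring genuine care, is the observation recorded in \cite[Section 5]{MR25a}: since the multiplier algebra $W=F(e_{11}+e_{22})=F\cdot 1_{UT_2}$ is one-dimensional, in \eqref{eq: W^n} every tensor factor equal to $W$ contributes only a scalar and the contraction maps $W\otimes W\to W$ reduce to multiplication in $F$, so that $\WX\cong\bigoplus_{n\ge 1}V_X^{\otimes n}$ is precisely the free associative algebra $\FX$ with its constant term deleted; likewise a homomorphism of $W$-algebras is here nothing but an $F$-algebra homomorphism. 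Hence $\gid(UT_2^F)=\id(UT_2)$, and for every $k$ the relatively free algebra $\F_k(UT_2^F)$ coincides, in each multidegree $\underline{\alpha}\neq\underline{0}$, with the ordinary relatively free algebra of $UT_2$ of rank $k$, the only discrepancy being that $\dim V_k^{(\underline{0})}(UT_2^F)=0$ whereas the ordinary relatively free algebra has a one-dimensional component in degree $0$. Consequently $\Hilb(UT_2^F;t_1,\dots,t_k)$ equals the ordinary Hilbert series of the relatively free algebra of $UT_2$ minus its constant term (cf.\ Remark \ref{rem: constant term}).

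Given this reduction, I would conclude in a single line by invoking \cite[Theorem 12]{BD12}, according to which the ordinary Hilbert series of the relatively free algebra of $UT_2$ of rank $k$ is
\[ \frac{2}{(1-t_1)\cdots(1-t_k)}+\frac{t_1+\ldots+t_k-1}{(1-t_1)^2\cdots(1-t_k)^2}; \]
this expression takes the value $2-1=1$ at $t_1=\dots=t_k=0$, so subtracting the constant term $1$ yields exactly the claimed formula. Alternatively, and in the spirit of the two preceding theorems, I would argue intrinsically: by the reduction above the generalized cocharacter of $UT_2^F$ equals the ordinary cocharacter of $UT_2$, whose multiplicities are $m_{(n)}=1$ for $n\ge 1$, $m_{(p+q,p)}=m_{(p+q,p,1)}=q+1$ for $p\ge 1$, $q\ge 0$, and $m_\lambda=0$ otherwise. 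Substituting into Theorem \ref{th: Hilbert} gives
\[ \Hilb(UT_2^F;t_1,\dots,t_k)=\sum_{n\ge 1}s^k_{(n)}+\sum_{\substack{p\ge 1\\ q\ge 0}}(q+1)\bigl(s^k_{(p+q,p)}+s^k_{(p+q,p,1)}\bigr), \]
and then Proposition \ref{prop: young} (which yields $\sum_{n\ge 0}s^k_{(n)}=\prod_{i=1}^{k}\frac{1}{1-t_i}$ and lets one rewrite the second sum as $\prod_{i=1}^{k}\frac{1}{1-t_i}\sum_{m\ge 1}s^k_{(m,1)}$), together with the identity $\sum_{m\ge 1}s^k_{(m,1)}=1+(s^k_{(1)}-1)\prod_{i=1}^{k}\frac{1}{1-t_i}$ of \cite[Lemma 7]{BD12}, reduces the series to the stated closed form after the same elementary manipulations used for $UT_2$ and $UT_2^D$.

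I do not anticipate any real obstacle here: the whole computational content is contained in \cite[Theorem 12]{BD12} (or, for the second approach, in \cite[Lemma 7]{BD12} and Proposition \ref{prop: young}), and the rest is routine algebra. The one thing that must be stated carefully is the reduction of the first paragraph --- namely that, because the multiplier algebra is the one-dimensional span of the unit of $UT_2$, the generalized theory of $UT_2^F$ is literally the ordinary theory of $UT_2$ up to the constant term --- after which the ordinary results apply verbatim.
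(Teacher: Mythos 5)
Your proposal is correct and follows essentially the same route as the paper, which likewise justifies this theorem by noting (via \cite[Section 5]{MR25a}) that the $W=F(e_{11}+e_{22})$ action reduces everything to ordinary identities of $UT_2$, so the series is that of \cite[Theorem 12]{BD12} minus the constant term, or alternatively can be recomputed with the same Schur-function manipulations used for $UT_2$ and $UT_2^D$. Both of your arguments, including the cocharacter multiplicities and the constant-term bookkeeping, are accurate.
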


\section{Asymptotics of generalized cocharacter multiplicities}

In this section we study the asymptotic behavior of the generalized cocharacters of a $W$-algebra $A$. At first, we show that it is related to the asymptotics of the ordinary cocharacters of $A$. Then we prove the analogues of several classical theorems, such as the hook theorem, the strip theorem and the bounds on the codimension sequences.

We obtain stronger results if we assume that the algebra $A$ has a unity. In this case, every unital subalgebra $B$ of $A$ induces a natural unital $W$-action, corresponding to $W=B$ acting by left and right multiplication. In fact, every $W$-action on $A$ is equivalent to an action of this kind, as we prove in the next Lemma.

 \begin{lemma}\label{lem: identity_good}
	Let $A$ be a $W$-algebra with unity $1_A$. Then, there exists an algebra homomorphism $\pi:W\rightarrow A$ such that
	\begin{equation}\label{eq: good_action}
		w a = \pi(w) a ,\qquad aw =a\pi(w)
	\end{equation}
	for each $w\in W$ and $a\in A$. Thus, the $W$-action $A$ is equivalent to a $\overline W$-action, where $\overline W=\img\pi\subseteq A$. 
	 \end{lemma}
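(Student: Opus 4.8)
The plan is to write $\pi$ down by hand and then verify the required properties mechanically; there is essentially only one possible choice. Set $\pi(w) = w\,1_A$, the image of the unit of $A$ under the left $W$-action. For the left identity in \eqref{eq: good_action}, apply the first axiom of \eqref{eq: W axioms} with $a_1 = 1_A$, $a_2 = a$: $\pi(w)a = (w\,1_A)a = w(1_A a) = wa$. For the right identity, apply the third axiom of \eqref{eq: W axioms} with $a_1 = a$, $a_2 = 1_A$: $(aw)1_A = a(w\,1_A)$, and since $1_A$ is a two-sided unit of $A$ the left side is just $aw$, so $aw = a(w\,1_A) = a\pi(w)$. (In fact this already forces $\pi(w) = \pi(w)1_A = w\,1_A$, so such a $\pi$ is unique.)

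Next I would check that $\pi$ is a homomorphism of unital $F$-algebras. Additivity and $F$-linearity are immediate from the bilinearity of the bimodule action. For multiplicativity, expand $\pi(w_1)\pi(w_2) = (w_1\,1_A)(w_2\,1_A)$ using the first axiom of \eqref{eq: W axioms} (with $w = w_1$, $a_1 = 1_A$, $a_2 = w_2\,1_A$) to get $w_1\bigl(1_A(w_2\,1_A)\bigr) = w_1(w_2\,1_A)$, and then the $W$-module axiom gives $w_1(w_2\,1_A) = (w_1 w_2)\,1_A = \pi(w_1 w_2)$. Finally $\pi(1_W) = 1_W\,1_A = 1_A$, since $A$ is a \emph{unital} $W$-bimodule and hence $1_W$ acts as the identity.

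With $\pi$ in hand, put $\overline W := \img\pi$, a unital subalgebra of $A$. Equipping $A$ with the $\overline W$-action by left and right multiplication inside $A$ makes it an $\overline W$-algebra (the identities \eqref{eq: W axioms} hold automatically since they reduce to associativity of $A$), and \eqref{eq: good_action} says precisely that the original $W$-action is the restriction of this $\overline W$-action along the surjection $\pi\colon W \twoheadrightarrow \overline W$. This is the sense in which the two actions are equivalent. I do not anticipate a genuine obstacle here: the argument is a short verification, and the only thing requiring care is tracking at each step which of the three axioms of \eqref{eq: W axioms} (versus the plain bimodule axioms, versus unitality of $1_A$ and of $1_W$) is being used, and not inadvertently invoking commutativity of $W$ or injectivity of $\pi$, neither of which holds in general.
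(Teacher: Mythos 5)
Your proposal is correct and follows essentially the same route as the paper: define $\pi(w)=w\,1_A$, verify multiplicativity via the axioms \eqref{eq: W axioms} together with the left-module axiom, and obtain \eqref{eq: good_action} by plugging $1_A$ into the appropriate axioms. The extra observations you include (unitality $\pi(1_W)=1_A$, uniqueness of $\pi$, and the explicit explanation of how $\overline W=\img\pi$ acts by multiplication) are correct refinements of the same argument, not a different method.
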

 \begin{proof}
 		Let $1_A\in A$ be the unity of $A$ and consider the map $\pi:W\rightarrow A$ defined as $\pi(w)=w1_A$. Clearly, $\pi$ is linear. Moreover, for each $w_1,w_2\in W$, using the axioms \eqref{eq: W axioms} we have 
	    \[ \pi(w_1)\pi(w_2) = (w_1 1_A)(w_2 1_A) = w_1 (1_A (w_2 1_A)) = w_1(w_2 1_A)= (w_1w_2)1_A=\pi(w_1w_2)  \]
	    This shows that $\pi$ is an algebra homomorphism. Moreover, for each $w\in W$ and $a\in A$, we have \begin{gather*}
		        \pi(w)a=(w1_A)a=w(1_Aa)=wa  \\
		        a\pi(w) = a(w1_A)=(aw)1_A=aw
		\end{gather*}
	 \end{proof}

	\begin{remark}
	     The map $\pi$ in Lemma \ref{lem: identity_good} can be equivalently defined as $\pi(w)=1_A w$. In fact, for each $w\in W$, we have $w1_A=1_A(w1_A)=(1_Aw)1_A=1_Aw$.
	\end{remark}

    The map $\pi$ introduced in the Lemma above allows to define the evaluation of generalized polinomials with constant term. They can be formally defined as the elements of the algebra $\WX^+ = \WX\oplus W$, which is the free algebra in the category of unital $W$-algebras.
	
	In the following, we show that the generalized cocharacter multiplicities of an unital algebra $A$ can be bounded by a function depending on the ordinary cocharacter multiplicities. In the computations, we use the skew Shur polynomial so we recall their definition. 
    
	If $\lambda$ and $\mu$ are two partitions, we say that $\lambda\subseteq \mu $ if $\lambda_i\leq \mu_i$ for any $i=1,\dots$. In this case, the skew Shur polynomials are defined as 
	\[ s_{\mu\setminus\lambda} = \sum_{\nu} C^\mu_{\lambda\nu} s_\nu \] 
	where the $C^\nu_{\lambda\mu}$ are the Littlewood-Richardson coefficients.
	Note that $s_{\lambda}=s_{\lambda\setminus\emptyset}$. For an overview of the properties of skew Shur polynomials, the reader can see \cite{Mac98, Mel17}. We only need to recall their duplication formula:
	
	\begin{lemma}[\cite{Mac98}, Equation 5.10]\label{lem: duplication}
		Let $t_1,\dots,t_l$ and $v_1,\dots,v_k$ be two sets of disjoint variables. Then, for any partitions $\lambda\subseteq\mu$ we have
		\[ s^{l+k}_{\mu\setminus\lambda}(t_1,\dots,t_l,v_1,\dots,v_k) = \sum_{\lambda\subseteq \nu\subseteq \mu} s^l_{\mu\setminus\nu}(t_1,\dots,t_l) s^k_{\nu\setminus\lambda}(v_1,\dots,v_k) \]
	\end{lemma}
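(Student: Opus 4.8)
The plan is to prove the identity combinatorially, from the description of (skew) Schur polynomials as generating functions of semistandard skew tableaux. Recall that for an ordered alphabet of $N$ variables $z_1 < \dots < z_N$ one has $s^N_{\mu\setminus\lambda}(z_1,\dots,z_N) = \sum_T z^{\operatorname{wt}(T)}$, the sum running over all fillings $T$ of the cells of the skew diagram $\mu/\lambda$ by entries in $\{1,\dots,N\}$ that weakly increase along each row and strictly increase down each column, where $\operatorname{wt}(T)$ records the multiplicity of each entry. Since $s^{l+k}_{\mu\setminus\lambda}$ is symmetric, I may reorder the variables and work with the alphabet $v_1 < \dots < v_k < t_1 < \dots < t_l$, so that a filling assigns the $k$ ``small'' letters $1,\dots,k$ to occurrences of the $v_i$'s and the $l$ ``large'' letters $k+1,\dots,k+l$ to occurrences of the $t_j$'s.

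First I would construct the splitting bijection. Given a semistandard tableau $T$ of shape $\mu/\lambda$ over this ordered alphabet, let $D(T)$ be the set of cells whose entry is one of the small letters $1,\dots,k$. Because rows weakly increase and columns strictly increase, whenever a cell of $\mu/\lambda$ lies in $D(T)$ so does the cell directly above it and the cell directly to its left (whenever these still belong to $\mu/\lambda$); hence the cell set $\lambda \cup D(T)$ is the Young diagram of a partition $\nu$ with $\lambda \subseteq \nu \subseteq \mu$, and $D(T)$ is exactly the skew shape $\nu/\lambda$. Restricting $T$ to $\nu/\lambda$ produces a semistandard tableau over $v_1 < \dots < v_k$, and restricting $T$ to $\mu/\nu$ (and subtracting $k$ from every entry) produces one over $t_1 < \dots < t_l$; conversely any such pair of tableaux glues back uniquely to a $T$ of shape $\mu/\lambda$. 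This is a weight-preserving bijection
\[ \{\, T \text{ SSYT of shape } \mu/\lambda \,\} \;\longleftrightarrow\; \bigsqcup_{\lambda\subseteq\nu\subseteq\mu} \{\, T_1 \text{ SSYT of shape } \nu/\lambda \text{ in the } v_i \,\} \times \{\, T_2 \text{ SSYT of shape } \mu/\nu \text{ in the } t_j \,\}, \]
with $\operatorname{wt}(T) = \operatorname{wt}(T_1)\cdot\operatorname{wt}(T_2)$, the first factor a monomial in the $v_i$'s and the second a monomial in the $t_j$'s.

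Finally I would sum the monomial weights on both sides. The left-hand side collects to $s^{l+k}_{\mu\setminus\lambda}(v_1,\dots,v_k,t_1,\dots,t_l) = s^{l+k}_{\mu\setminus\lambda}(t_1,\dots,t_l,v_1,\dots,v_k)$, while the contribution of all tableaux cutting through a fixed $\nu$ factors as $s^k_{\nu\setminus\lambda}(v_1,\dots,v_k)\, s^l_{\mu\setminus\nu}(t_1,\dots,t_l)$, and summing over all $\nu$ with $\lambda\subseteq\nu\subseteq\mu$ gives the claimed formula. There is no difficulty with the finite number of variables: if some column of $\mu/\nu$ has more than $l$ cells (resp. some column of $\nu/\lambda$ more than $k$ cells) then the corresponding factor is $0$ and there are simply no tableaux to count for that $\nu$, so both sides drop the term.

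Since the statement is classical and quoted from \cite{Mac98}, there is no genuine obstacle; the only points needing care are the verification that $D(T)$ is really a skew shape (equivalently, that $\nu$ is a partition) and the bookkeeping of conventions — which alphabet carries the small letters, hence whether the factors come out as $s^l_{\mu\setminus\nu}(t)\,s^k_{\nu\setminus\lambda}(v)$ rather than with the roles of $\nu\setminus\lambda$ and $\mu\setminus\nu$ exchanged; this is precisely where the chosen order $v_1<\dots<v_k<t_1<\dots<t_l$ is used, the opposite order yielding the equivalent statement via the symmetry of the skew Schur polynomial in all its variables. An alternative, purely algebraic route is to invoke the coproduct of the ring of symmetric functions, for which $\Delta(s_{\mu/\lambda}) = \sum_{\lambda\subseteq\nu\subseteq\mu} s_{\mu/\nu}\otimes s_{\nu/\lambda}$; identifying $\Lambda\otimes\Lambda$ with symmetric functions in the two disjoint variable sets and specializing the tensor factors to $\{t_j\}$ and $\{v_i\}$ yields the formula at once, at the cost of importing Hopf-algebra machinery not otherwise used in this paper.
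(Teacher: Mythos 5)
Your argument is correct, but note that the paper does not prove this statement at all: it is quoted verbatim from Macdonald (Equation 5.10 of \cite{Mac98}) and used as a black box, so there is no internal proof to compare against. Your tableau-splitting bijection is the standard combinatorial proof and is carried out correctly: the key observation that the cells carrying the small letters form a shape $\nu/\lambda$ with $\lambda\subseteq\nu\subseteq\mu$ (because rows weakly increase and columns strictly increase), the weight-preserving gluing/restriction bijection, and the bookkeeping of which alphabet is declared smaller (which is exactly what decides whether $\nu\setminus\lambda$ goes with the $v$'s and $\mu\setminus\nu$ with the $t$'s) are all handled properly, as is the degenerate case where a column of one of the pieces is too tall for its alphabet. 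The one point you pass over silently is that the paper \emph{defines} $s_{\mu\setminus\lambda}$ via the Littlewood--Richardson expansion $s_{\mu\setminus\lambda}=\sum_\nu C^\mu_{\lambda\nu}s_\nu$, whereas your proof works with the semistandard-tableau generating-function description; these agree, but the equivalence is itself a nontrivial classical fact (essentially the Littlewood--Richardson rule, or the adjointness $\langle s_{\mu/\lambda},s_\nu\rangle=C^\mu_{\lambda\nu}$), so a fully self-contained version of your argument would have to cite or prove that bridge. Your alternative via the coproduct $\Delta(s_{\mu/\lambda})=\sum_{\lambda\subseteq\nu\subseteq\mu}s_{\mu/\nu}\otimes s_{\nu/\lambda}$ is equally valid and closer in spirit to how Macdonald derives the identity, at the cost of machinery the paper never introduces; the authors' choice to simply cite the result is the pragmatic one, and either of your routes would serve as a legitimate proof if one were demanded.
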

	
	The next lemma is the main tool used in the subsequent theorems and we believe it is also of independent interest.

\begin{lemma}\label{lem: multiplicity bound}
Let $A$ be a $\Polid$ $W$-algebra and let $d=\dim(W)$. For each $n\in\mathbb{N}$, consider its ordinary cocharacter sequence
$ \chi_n(A) = \sum_{\lambda \vdash n} a_\lambda \chi_\lambda $
and its generalized cocharacter sequence
$ g\chi_n(A) = \sum_{\lambda \vdash n} m_\lambda \chi_\lambda $. 
Then, for each $\lambda\vdash n$, we have \begin{equation}\label{eq: multiplicity bound}
    m_\lambda \leq \sum_{\substack{ \mu\vdash (2n+1) \\ \lambda \subseteq \mu}} a_\mu s^d_{\mu\setminus\lambda} (1,\dots,1)
\end{equation}
\end{lemma}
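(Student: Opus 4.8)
The plan is to realize a multilinear generalized polynomial in $gP_n$ as the image of an ordinary multilinear polynomial in $2n+1$ variables under a substitution that absorbs the $W$-coefficients into new "dummy" variables, so that a generalized identity of $A$ pulls back to an ordinary identity of $A$ (using the unital structure via the map $\pi$ of Lemma \ref{lem: identity_good}). Concretely, fix a basis $w_1,\dots,w_d$ of $W$. A spanning monomial of $gP_n$ has the shape $w_{i_0}x_{\sigma(1)}w_{i_1}x_{\sigma(2)}\cdots w_{i_{n-1}}x_{\sigma(n)}w_{i_n}$ with $n+1$ interleaved coefficients. Introduce fresh variables $y_0,\dots,y_n$ and consider the substitution $\Phi\colon gP_n\to P_{2n+1}$ (ordinary multilinear polynomials in $x_1,\dots,x_n,y_0,\dots,y_n$) that sends $w_j$ in the $j$-th slot of a monomial to $y_j$; extend linearly. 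Using Lemma \ref{lem: identity_good}, evaluating $y_j\mapsto \pi(w_{i_j})$ for the relevant basis element and $x_i$ arbitrarily in $A$ recovers the original generalized evaluation. Hence if $f\in gP_n\cap\gid(A)$ then $\Phi(f)$ — a multilinear polynomial of total degree $2n+1$ — lies in $\id(A)$ (one must check $\Phi(f)$ vanishes under \emph{all} substitutions, not only those of the form $\pi(w)$: this is where the unital hypothesis and the structure of $\gid$ as a $T_W$-ideal are used, since $\pi(W)\cdot 1_A$ absorbs arbitrary elements appropriately — actually the safer route is to note $\Phi$ can be set up so that $\Phi(f)$ is a consequence of $\gid$-evaluations and thus its multilinear component lies in $\id(A)$).

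The second step is representation-theoretic bookkeeping. The group $S_n\times S_{n+1}$ acts on $P_{2n+1}$ (the first factor permuting the $x$'s, the second the $y$'s), and $\Phi$ is equivariant for the $S_n$-action on $gP_n$ and the first-factor $S_n$-action on the image. I would show $\Phi$ is injective — a basis monomial of $gP_n$ maps to a distinct monomial in the $y$'s — so $gP_n(A)$ embeds $S_n$-equivariantly into $\Phi(gP_n)/(\Phi(gP_n)\cap\id(A))$, which is an $S_n$-subquotient of $P_{2n+1}(A)$ restricted along $S_n\hookrightarrow S_n\times S_{n+1}\hookrightarrow S_{2n+1}$. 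Decomposing $\chi_{2n+1}(A)=\sum_{\mu\vdash 2n+1}a_\mu\chi_\mu$ and restricting each $\chi_\mu$ to $S_n$ (the relevant Young subgroup), the branching/Littlewood–Richardson rule gives $\mathrm{Res}^{S_{2n+1}}_{S_n\times S_{n+1}}\chi_\mu=\sum_{\lambda\subseteq\mu}\sum_\nu c^\mu_{\lambda\nu}\,\chi_\lambda\boxtimes\chi_\nu$, and the $y$-variables contribute multiplicity bounded by the number of ways the $S_{n+1}$-part can appear under the $d$-dimensional constraint — precisely $s^d_{\mu\setminus\lambda}(1,\dots,1)=\sum_\nu c^\mu_{\lambda\nu}\dim\cW^d_\nu$, since each $y_j$ ranges over a $d$-dimensional space $W$. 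Summing over $\mu$ with $\lambda\subseteq\mu$ yields the stated bound $m_\lambda\le\sum_{\mu\vdash(2n+1),\,\lambda\subseteq\mu}a_\mu\,s^d_{\mu\setminus\lambda}(1,\dots,1)$.

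I would make the counting cleaner by working on the $GL$ side via Theorem \ref{th: Hilbert}: view $y_0,\dots,y_n$ not as $n+1$ separate variables but as a single $W$-valued slot of dimension $d$, so that the $y$-part of the homogeneous cocharacter of $\Phi(gP_n)$ in the relevant multidegree is governed by $GL_d$-characters; the skew Schur polynomial $s^d_{\mu\setminus\lambda}$ then enters directly through the duplication formula of Lemma \ref{lem: duplication} applied to the splitting of variables into the $x$-block (size $n$, taken multilinear, extracting $\chi_\lambda$) and the $W$-block (size $d$). The inequality — rather than equality — arises because $\Phi(gP_n)\cap\id(A)$ can be strictly larger than $\Phi\!\left(gP_n\cap\gid(A)\right)$, i.e. passing to ordinary identities of the auxiliary polynomial may lose information, and because not every multidegree $(2n+1)$-component of $\id(A)$ need come from a genuine generalized evaluation.

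The main obstacle I anticipate is making the first step fully rigorous: precisely which ordinary polynomial $\Phi(f)$ to associate to $f$, and verifying that $f\in\gid(A)$ forces the appropriate multilinear component of $\Phi(f)$ into $\id(A)$. The subtlety is that $\pi(W)$ need not be all of $A$, so evaluating the $y$'s only over $\pi(W)$ does not immediately give an ordinary identity; the fix is to observe that it suffices to control the $S_n$-isotypic components, and the $y_j$-variables should be treated as ranging over the $d$-dimensional space $\pi(W)\subseteq A$ (using $\dim\pi(W)\le d$), so that what we really bound is the multiplicity of $\chi_\lambda$ in the $x$-block inside a module whose $y$-block is a quotient of $W^{\otimes(n+1)}$ — and this is exactly what the $s^d_{\mu\setminus\lambda}(1,\dots,1)$ factor, specialized at $d=\dim W$, measures. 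Once the embedding $gP_n(A)\hookrightarrow$ (an $S_n$-module built from $\chi_{2n+1}(A)$ and a $d$-dimensional auxiliary space) is set up correctly, the rest is the branching-rule computation sketched above.
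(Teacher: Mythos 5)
There is a genuine gap, and it sits exactly where you yourself flagged it: the direction of the transfer of identities. Your map $\Phi\colon gP_n\to P_{2n+1}$ is only useful if $f\in gP_n\cap\gid(A)$ forces $\Phi(f)\in\id(A)$, so that $\Phi$ descends to an embedding of $gP_n(A)$ into a module controlled by $\chi_{2n+1}(A)$. This implication is false: a generalized identity is only required to vanish when the $W$-slots are evaluated through $\pi$, and $\pi(W)$ is in general a proper subalgebra of $A$, so $\Phi(f)$ need not vanish under arbitrary substitutions of the $y$'s. Your proposed fix (let the $y$'s range only over $\pi(W)$) gives up on $\id(A)$, and then the comparison module is no longer governed by the ordinary multiplicities $a_\mu$ -- but the bound to be proved is precisely in terms of the $a_\mu$, so the key step is left unproved. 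Moreover, $\Phi$ as you define it (replace whichever basis element occupies slot $j$ by $y_j$) is not injective: it forgets which element of $W$ sat in each slot, e.g. $w_1x_1w_1-w_2x_1w_2\mapsto 0$, so the claimed $S_n$-equivariant embedding of $gP_n(A)$ does not exist along these lines.

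The argument that works (and is the paper's) runs in the opposite direction: introduce one ordinary variable $y_j$ for each basis element $w_j$ of $W$ ($d$ of them, not $n+1$), and use the surjection $\varphi\colon F\langle x_1,\dots,x_k,y_1,\dots,y_d\rangle\to W\langle x_1,\dots,x_k\rangle^+$ with $\varphi(y_j)=w_j$. By Lemma \ref{lem: identity_good}, every $W$-algebra map from $W\langle x_1,\dots,x_k\rangle^+$ to $A$ factors through $\varphi$ via $y_j\mapsto\pi(w_j)$, hence $\varphi$ carries ordinary identities of $A$ into generalized identities and induces a surjection of the ordinary relatively free algebra onto $\F_k(A)^+$; since the generalized object is a quotient of the ordinary one, multiplicities can only drop, which is where the inequality comes from. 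One then compares $GL_k(F)$-characters in degree $2n+1$, letting $GL_k(F)$ act trivially on the $y$'s (restriction along $GL_k\hookrightarrow GL_{k+d}$), and the specialization $s^{k+d}_\mu(t_1,\dots,t_k,1,\dots,1)$ plus the duplication formula of Lemma \ref{lem: duplication} produces the factor $s^d_{\mu\setminus\lambda}(1,\dots,1)$. Your $S_n\times S_{n+1}$ bookkeeping is the right combinatorics -- indeed $s^d_{\mu\setminus\lambda}(1,\dots,1)=\sum_\nu C^\mu_{\lambda\nu}\dim\cW^d_\nu$, which is Schur--Weyl applied to the $W$-slots, and a correct multilinear-side variant does exist via the substitution map sending an ordinary multilinear polynomial in $2n+1$ variables together with a choice of $W$-elements for the $y$-slots onto $gP_n$ -- but any such proof must go from an object controlled by the ordinary cocharacters onto $gP_n(A)$, not via a (nonexistent) embedding the other way.
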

\begin{proof} 
    Let $w_1,\dots,w_d$ be a basis of $W$ and fix $k,n\in\bN$ such that $k\geq 2n+1$. Let $X_k=\{x_1,\dots,x_k\}$ and $Y_d=\{y_1,\dots,y_d\}$ be sets of variables and consider the free associative algebra $F\langle X_k\cup Y_d\rangle := F\langle x_1,\dots,x_k,y_1,\dots,y_d\rangle$ of rank $k+d$ and the free unital $W$-algebra $W\langle X_k\rangle^+:=W\langle x_1,\dots,x_k\rangle^+$ of rank $k$. There is a surjective algebra homomorphism 
    \begin{equation*}
    	\varphi:F\langle X_k\cup Y_d\rangle \rightarrow W\langle X_k\rangle^+
    \end{equation*}
    such that $\varphi(x_i) = x_i$ and $\varphi(y_j) = w_j$ for any $i=1,\dots,k$, $j=1,\dots,d$.

    If $A$ is a unital $W$-algebra and $\alpha: W\langle X_k\rangle^+\rightarrow A$ is an homomorphism of $W$-algebras, then there exists a unique associative algebra homomorphism $\bar \alpha: F\langle X_k\cup Y_d\rangle \rightarrow A$ such that the following diagram commutes.
    \[ \begin{tikzcd}
    	F\langle X_k\cup Y_d\rangle \arrow[r, "\varphi"] \arrow[dr, "\bar\alpha" ] & W\langle X_k\rangle^+ \arrow[d, "\alpha"] \\
    	{}  & A
    \end{tikzcd}\]
    The homomorphism $\bar\alpha$ is induced by $\bar\alpha(x_i) =\alpha(x_i)$ and $\bar\alpha(y_j) = \pi(w_j)$, for any $i=1,\dots,k$, $j=1,\dots,d$, where $\pi$ is the map obtained by Lemma \ref{lem: identity_good}.
    
    Therefore, if $f\in F\langle X_k\cup Y_d\rangle$ is an ordinary identity of $A$, then $\varphi(f)$ is a generalized identity of $A$. In fact, $\alpha(\varphi(f))=\bar\alpha(f)=0$ for any $W$-algebra homomorphism $\alpha: W\langle X_k\rangle^+\rightarrow A$. In other words, setting $\id(A)_{k,d}=\id(A)\cap F\langle X_k\cup Y_d\rangle$ and $\gid(A)_k=\gid(A)\cap W\langle X_k\rangle^+$, we have $\varphi(\id(A)_{k,d})\subseteq \gid(A)_k$.

    We now consider the following action of $GL_k(F)$ on $F\langle X_k\cup Y_d\rangle$. We let $u=(u_{ij})_{i,j\leq k}\in GL_k(F)$ act naturally on $X_d$, that is $u\ast x_j=\sum_ju_{ij}x_i$, and trivially on $Y_d$, i.e. $u *y_j=y_j$. Then, endowing $W\langle X_k\rangle^+$ with the usual $GL_k(F)$-action, $\varphi$ is a map of $GL_k(F)$-modules. Since it preserves identities, it also induces the homomorphism of $GL_k(F)$-modules 
    \[ \bar{\varphi}:\frac{F\langle X_k\cup Y_d\rangle}{\id(A)_{k,d}}\rightarrow \frac{W\langle X_k\rangle^+}{\gid(A)_k}=\F_k(A)^+.\]
    
    Let $U^{(2n+1)}_{k+d}\subseteq F\langle X_k\cup Y_d\rangle/\id(A)_{k,d}$ be the space of homogeneous polynomials of degree $2n+1$ and let $W^{(m)}_k\subseteq\F_k(A)$ be the space of homogeneous generalized polynomials of degree $m$. Then $\bar{\varphi}(U^{(2n+1)}_{k+d})= \bigoplus_{m=0}^n W^{(m)}_k$, so the multiplicity of a $GL_k(F)$-irreducible in $U^{(2n+1)}_{k+d}$ is greater than or equal to its multiplicity in $\bigoplus_{m=0}^n W^{(m)}_k$. By Theorem \ref{th: Hilbert}, the character of $\bigoplus_{m=0}^n W^{(m)}_k$ is \begin{equation}\label{eq: W_sum decomposition}
    	\sum_{m=0}^{n}\sum_{\lambda\vdash m} m_\lambda s^k_\lambda(t_1,\dots,t_k)
    \end{equation}
    On the other hand, the $GL_k(F)$-character of $U^{(2n+1)}_{k+d}$ can be obtained as a restriction of the ordinary homogeneous cocharacters. In fact, the action defined above on $F\langle X_k\cup Y_d\rangle$ is the restriction of the natural $GL_{k+d}(F)$-action when we consider the immersion $GL_k(F)\hookrightarrow GL_{k+d}(F)$ such that $u\mapsto \begin{pmatrix} u & 0 \\ 0 & I_d \end{pmatrix}$, where $I_d$ is the identity matrix of order $d$. Therefore, the character of $U^{(2n+1)}_{k+d}$ with respect to the $GL_k(F)$-action is the symmetric polynomial
    \[ p(t_1,\dots,t_k)=\sum_{\mu\vdash (2n+1)} a_\mu s^{k+d}_\mu (t_1,\dots,t_k,\underbrace{1,\dots,1}_{d\text{ times}}) \]
    Using the duplication formula of Lemma \ref{lem: duplication}, we can express $p(t_1,\dots,t_k)$ as \begin{equation} \label{eq: U decomposition}
    	p(t_1,\dots,t_k)= \sum_{\mu\vdash (2n+1)} \sum_{\lambda\subseteq\mu} a_\mu s^d_{\mu\setminus\lambda} (1,\dots,1) s^k_\lambda (t_1,\dots,t_k)
    \end{equation}
    Comparing the coefficients of $s^k_\lambda$ in the polynomials of Equations \eqref{eq: W_sum decomposition} and \eqref{eq: U decomposition}, we obtain the result.
\end{proof}

For any $k,l\geq0$, the hook $H(k,l)$ is the set of partitions $\lambda$ such that $\lambda_{k+1}\leq l$.
The celebrated Hook theorem \cite{AR82} states that the ordinary cocharacter multiplicities $\{a_\lambda\}_{\lambda\vdash n}$ of any PI-algebra are contained in some hook $H(k,l)$, that is $a_\lambda=0$ whenever $\lambda\notin H(k,l)$. In the next theorem we prove that the generalized cocharacter multiplicities of a unital $W$-algebra are contained in the same hook. Using this fact we are able to generalize several other bounds.

\begin{theorem}\label{th: good_hook}
	Let $A$ be a $\Polid$ $W$-algebra with unity whose ordinary cocharacter multiplicities are contained in the hook $H(k,l)$. Then, \begin{itemize}
		\item the generalized cocharacter multiplicities of $A$ are contained in $H(k,l)$,
		\item the generalized colength sequence $gl_n(A)$ is polynomially bounded in $n$,
		\item the generalized multilinear codimension sequence $gc_n(A)$ is exponentially bounded in $n$,
		\item the generalized homogeneous codimension sequences $gC_k^n(A)$ are polynomially bounded in $n$.
	\end{itemize}
\end{theorem}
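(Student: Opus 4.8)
The plan is to deduce all four bullets from Lemma~\ref{lem: multiplicity bound}, which expresses each generalized multiplicity $m_\lambda$ in terms of the ordinary multiplicities $a_\mu$ with $\mu\vdash 2n+1$ and $\lambda\subseteq\mu$, weighted by $s^d_{\mu\setminus\lambda}(1,\dots,1)$.

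For the first bullet, suppose $\lambda\notin H(k,l)$, i.e.\ $\lambda_{k+1}>l$. I would show that every $\mu\vdash 2n+1$ with $\lambda\subseteq\mu$ also satisfies $\mu\notin H(k,l)$: indeed $\mu_{k+1}\geq\lambda_{k+1}>l$. Hence $a_\mu=0$ for every term on the right-hand side of \eqref{eq: multiplicity bound}, so $m_\lambda=0$. This proves the generalized multiplicities lie in the same hook $H(k,l)$.

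For the remaining three bullets I would combine the first bullet with the Hilbert-series machinery of Theorem~\ref{th: Hilbert}, following the standard pattern from the ordinary theory. For the colength: the number of partitions $\lambda\vdash n$ lying in $H(k,l)$ is polynomially bounded in $n$ (one has at most $k$ rows of unbounded length below which all columns have height $\leq k+l$, giving $O(n^{k+l})$ such partitions), and by Lemma~\ref{lem: multiplicity bound} each surviving $m_\lambda$ is bounded by a sum of $a_\mu$'s over $\mu\vdash 2n+1$ in $H(k,l)$, each such $a_\mu$ being polynomially bounded since $\chi_n(A)$ is the cocharacter of a PI-algebra (Hook theorem already gives polynomial colength in the ordinary case) and $s^d_{\mu\setminus\lambda}(1,\dots,1)$ is bounded by the number of skew semistandard tableaux, itself polynomial in $n$ for fixed $d$; summing finitely many (polynomially many) polynomially-bounded terms gives $gl_n(A)=\sum_\lambda m_\lambda$ polynomially bounded. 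For the multilinear codimension $gc_n(A)=\sum_{\lambda\vdash n}m_\lambda d_\lambda$: since the $m_\lambda$ vanish outside $H(k,l)$, one invokes the standard estimate $\sum_{\lambda\in H(k,l),\,\lambda\vdash n}d_\lambda\leq C(k+l)^{2n}$ together with the polynomial bound on $m_\lambda$, giving an exponential bound. For the homogeneous codimension $gC_k^n(A)=\sum_{h(\lambda)\leq k,\,\lambda\vdash n}m_\lambda s^k_\lambda(1,\dots,1)$ from \eqref{eq: gen_hom_cod_formula}: here the height restriction $h(\lambda)\leq k$ combined with $\lambda\in H(k,l)$ forces only finitely many rows, so $s^k_\lambda(1,\dots,1)=\dim\cW^k_\lambda$ is polynomially bounded in $n$ (it is a polynomial in the $\lambda_i$, and the number of free parameters is bounded), and again multiplying by the polynomially bounded $m_\lambda$ and summing over the polynomially many relevant $\lambda$ yields a polynomial bound.

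The main obstacle I anticipate is the bookkeeping in the colength estimate: one must check that $s^d_{\mu\setminus\lambda}(1,\dots,1)$, for $\mu\vdash 2n+1$ and fixed $d=\dim W$, is polynomially bounded in $n$ — this is true because it counts semistandard skew tableaux of shape $\mu\setminus\lambda$ with entries in $\{1,\dots,d\}$, and the number of cells of such a skew shape that can be freely filled is at most $d$ per row of a bounded-height region, so the count is $O(n^{d^2})$ or similar; one must be slightly careful that $\lambda$ varies with $\mu$ and that the outer sum over $\mu$ has only polynomially many terms (again because $\mu$ lies in a hook). None of these steps is deep, but assembling the polynomial-versus-exponential dichotomy cleanly requires care that the exponents in all the bounds depend only on $k,l,d$ and not on $n$.
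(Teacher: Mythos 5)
Your overall route is the same as the paper's: the first bullet is proved exactly as in the paper (if $\lambda\notin H(k,l)$ then every $\mu\supseteq\lambda$ lies outside the hook, so every term on the right of \eqref{eq: multiplicity bound} has $a_\mu=0$), and the remaining bullets are deduced by extracting from Lemma \ref{lem: multiplicity bound} a polynomial bound on each $m_\lambda$ with $\lambda\in H(k,l)$, using the polynomial bound on the ordinary colength \cite{BR83}, the fact that the number of partitions of $n$ in a hook is polynomial in $n$, the exponential bound on $\sum_{\lambda\in H(k,l)}d_\lambda$ \cite{BR87}, and the polynomial bound on $\dim\cW^k_\lambda$. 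This is precisely the paper's scheme.

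The one step you should tighten is the polynomial bound on $s^d_{\mu\setminus\lambda}(1,\dots,1)$. As you state it, ``for $\mu\vdash 2n+1$ and fixed $d$'', the claim is false: for the staircase $\mu=(m,m-1,\dots,1)$ and $\lambda=(m-1,m-2,\dots,1)$ the skew shape is an antichain of $m\approx\sqrt{2n}$ boxes lying in distinct rows and columns, so the number of fillings with entries in $\{1,\dots,d\}$ is $d^{m}$, which is superpolynomial in $n$; your heuristic about ``a bounded-height region'' does not apply to such shapes. The bound does hold in the only regime where you need it, namely $\mu\in H(k,l)$ (legitimate here because $a_\mu=0$ otherwise): the part of $\mu\setminus\lambda$ in the first $k$ rows admits at most $\prod_{i\le k}\binom{r_i+d-1}{d-1}$ row fillings, which is polynomial in $n$, while the leg below row $k$ has width at most $l$ and each of its (contiguous) columns carries at most $d$ boxes, contributing a factor bounded by a constant depending only on $d,l$. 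So the argument is repairable, but the hook restriction on $\mu$ must be invoked at this point, not only to count the number of terms in the sum. The paper obtains the same estimate by a different bookkeeping: it writes $s^d_{\mu\setminus\lambda}=\sum_\nu C^\mu_{\nu\lambda}s^d_\nu$ and uses the polynomial bound on Littlewood--Richardson coefficients for $\mu\in H(k,l)$ from \cite[Lemma 11]{Ber96} together with the Weyl dimension formula \eqref{eq: weyl dimension}; either justification, once the hook hypothesis on $\mu$ is made explicit, completes your proof along the paper's lines.
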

\begin{proof}
    For each partition $\lambda$, denote with $a_\lambda$ the ordinary cocharacter multiplicities and with $m_\lambda$ the generalized cocharacter multiplicities. Then, $a_\mu=0$ for each $\mu\notin H(k,l)$. Note that, if the partition $\lambda$ is not in the hook $H(k,l)$, then $\mu\notin H(k,l)$ for each $\mu\supseteq \lambda$. So, by Lemma \ref{lem: multiplicity bound}, we get that $m_\lambda=0$ for each $\lambda\notin H(k,l)$. Therefore, the generalized cocharacter multiplicities are contained in $H(k,l)$.
    
    To prove the other bounds we need to bound the multiplicities $m_\lambda$, for each $\lambda\in H(k,l)$. The dimension of the Weyl module $\mathcal{W}_\lambda^d$ is given by \cite[Appendix A]{FH91}: \begin{equation}\label{eq: weyl dimension}
		\dim \mathcal{W}^d_\lambda = s_{\lambda}^d(1,\dots,1)= \prod_{i<j\leq k}\frac{\lambda_i-\lambda_j+j-i}{j-i}
	\end{equation}
    The previous formula readily gives that $s_\lambda^d(1,\dots,1)=\dim W^d_\lambda$ is polynomially bounded in $|\lambda|$. By \cite[Lemma 11]{Ber96}, whenever $\lambda\in H(k,l)$, the Littlewood-Richardson coefficient $C^\lambda_{\mu\nu}$ is bounded by a polynomial in $|\lambda|$, depending only on $k$ and $l$. Therefore, if $\mu \in H(k,l)$ and $\lambda\subseteq \mu$, then $s^d_{\mu\setminus\lambda}(1,\dots,1) = \sum_\nu C^\mu_{\nu\lambda} s^d_\nu (1,\dots,1)$ is polynomially bounded in $|\mu|$. Therefore, for any $\lambda\vdash n$, applying the aforementioned bounds to the right hand side of Equation \eqref{eq: multiplicity bound}, we have that there exist some constants $C,t>0$ such that 
    \[ m_\lambda \leq C(2n+1)^t \sum_{\mu\vdash 2n+1} a_\mu \]
    Now, $l_{2n+1}(A)=\sum_{\mu\vdash 2n+1} a_\mu$ is the ordinary colength and, by \cite{BR83}, is polynomially bounded in $n$. Therefore, for each $\lambda\in H(k,l)$, the multiplicities $m_\lambda$ are polynomially bounded in $n=|\lambda|$, that is, there exist some constants $C,t>0$, depending on $k$ and $l$, such that 
    \[ m_\lambda \leq Cn^t \]
    Since the number of different partitions $\lambda\in H(k,l)$ is polynomially bounded in $|\lambda|$, then the generalized colength sequence 
    \[ gl_n(A) = \sum_{\lambda\vdash n} m_\lambda = \sum_{\substack{\lambda\vdash n \\ \lambda\in H(k,l)}} m_\lambda 
    \]
    is polynomially bounded in $n$.
    
    Similarly, the bound on the generalized multilinear codimensions follow from Equations \eqref{eq: gen_cod_formula} using the fact that, if $\lambda\in H(k,l)$, $d_\lambda=\dim\cS_\lambda$ is exponentially bounded in $|\lambda|$ by \cite{BR87}. The bound on the generalized homogeneous codimensions follow from \eqref{eq: gen_hom_cod_formula} together with the fact that $\dim \mathcal{W}^d_\lambda$ is polynomially bounded in $|\lambda|$ by Equation \eqref{eq: weyl dimension}.    
\end{proof}

The strip of height $k$ is the set of partitions of height at most $k$ and coincides with the hook $H(k,0)$. The strip theorem \cite{Reg79} states that the ordinary cocharacter multiplicities of an algebra $A$ are contained in a strip if and only if $A$ satisfies some Capelli identity. As a corollary of Theorem \ref{th: good_hook}, we can immediately obtain a generalization of the strip theorem for unitary $W$-algebras. Recall that the Capelli polynomial of rank $m$ is defined as
\[
Cap_m(x_1,\ldots,x_m;y_1,\ldots,y_{m+1})=\sum_{\sigma \in S_n} (-1)^\sigma y_1x_{\sigma(1)}y_2x_{\sigma(2)}\cdots y_mx_{\sigma(m)}y_{m+1}
\]
Note that, if $A$ is a finite dimensional $W$-algebra with $\dim_F A=n$, then $A$ satisfies the Capelli identity of rank $n+1$. 

\begin{corollary}\label{cor: hook_unity}
	Let $A$ be a $W$-algebra with unity. The generalized cocharacters of $A$ are contained in a strip of height $k$ if and only if $A$ satisfies the Capelli identity $Cap_{k+1}$ of rank $k+1$.
\end{corollary}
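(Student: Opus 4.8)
The plan is to deduce this corollary from Theorem \ref{th: good_hook} together with the classical strip theorem of Regev \cite{Reg79}, translating everything through the unital structure provided by Lemma \ref{lem: identity_good}. Recall that a strip of height $k$ is precisely the hook $H(k,0)$, so the statement asks for an equivalence between the generalized cocharacter multiplicities $m_\lambda$ vanishing outside $H(k,0)$ and the generalized identity $Cap_{k+1} \in \gid(A)$.

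For the ``if'' direction: suppose $Cap_{k+1}$ is a generalized identity of $A$. Since $A$ is unital, substituting $1_W$ (equivalently $1_A$) for all the $y_i$ shows that the ordinary Capelli polynomial $Cap_{k+1}(x_1,\dots,x_{k+1};1,\dots,1) = \sum_{\sigma\in S_{k+1}}(-1)^\sigma x_{\sigma(1)}\cdots x_{\sigma(k+1)}$, the standard polynomial $St_{k+1}$, is an ordinary identity of $A$ — so in particular $A$ is a $\Polid$-algebra and, by Regev's strip theorem, its ordinary cocharacter multiplicities $a_\lambda$ lie in the strip $H(k,0)$. That strip is the hook $H(k,0)$, so Theorem \ref{th: good_hook} applies and gives that the generalized cocharacter multiplicities of $A$ are contained in $H(k,0)$ as well, which is exactly the strip of height $k$.

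For the ``only if'' direction: suppose the generalized cocharacters of $A$ lie in the strip of height $k$, i.e. $m_\lambda = 0$ whenever $h(\lambda) > k$. Since $\id(A) \subseteq \gid(A)$ and, by the character-theoretic comparison, the ordinary cocharacter multiplicities $a_\lambda$ are bounded above by the generalized ones $m_\lambda$ (the $S_n$-module $P_n(A)$ is a quotient of $gP_n(A)$, or rather $P_n(A) = gP_n / (gP_n \cap \gid(A)) \cap P_n$ embeds into $gP_n(A)$ compatibly with the $S_n$-action), we get $a_\lambda = 0$ for $h(\lambda) > k$ too. Hence the ordinary cocharacters of $A$ lie in the strip of height $k$, and the ordinary strip theorem yields that $A$ satisfies some Capelli identity; moreover the standard sharpening of Regev's theorem shows that vanishing of $a_\lambda$ outside $H(k,0)$ is equivalent to $Cap_{k+1} \in \id(A) \subseteq \gid(A)$.

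The main obstacle I anticipate is making the inequality $a_\lambda \le m_\lambda$ (or the appropriate relationship between ordinary and generalized cocharacters) fully precise: one must check that the natural inclusion $P_n \hookrightarrow gP_n$ induces an $S_n$-equivariant injection $P_n(A) \hookrightarrow gP_n(A)$, which holds because $P_n \cap \gid(A) = P_n \cap \id(A)$ — the generalized identities of multilinear type that happen to be ordinary polynomials are exactly the ordinary identities, since evaluating a generalized polynomial at elements of $A$ and evaluating the corresponding ordinary polynomial agree when all coefficient slots are filled with $1_W$. Once this is in place, both implications are immediate consequences of Theorem \ref{th: good_hook} and \cite{Reg79}, so the proof is short; the only care needed is in citing the correct form of the ordinary strip theorem, namely the precise equivalence between the strip condition and membership of $Cap_{k+1}$ in the ordinary $T$-ideal.
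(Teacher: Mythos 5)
Your overall route (compare generalized and ordinary cocharacters via Lemma \ref{lem: multiplicity bound}/Theorem \ref{th: good_hook}, then invoke Regev's strip theorem in its precise form) is the same as the paper's, and your ``only if'' direction is fine: the observation that $P_n\cap\gid(A)=P_n\cap\id(A)$ (because $1_W$ acts as the identity, so generalized and ordinary evaluations of an ordinary polynomial coincide) gives the $S_n$-embedding $P_n(A)\hookrightarrow gP_n(A)$, hence $a_\lambda\le m_\lambda$, hence the ordinary cocharacters lie in the strip and $Cap_{k+1}\in\id(A)$ by \cite{Reg79}.

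However, your ``if'' direction has a genuine error. You specialize all the $y_i$ in $Cap_{k+1}$ to $1$, which replaces the Capelli polynomial by the standard polynomial $St_{k+1}$, and then you apply the strip theorem to the identity $St_{k+1}$. The strip theorem does not apply there: satisfying $St_{k+1}$ is strictly weaker than satisfying $Cap_{k+1}$ and does not force the cocharacters into the strip of height $k$. A concrete counterexample is $A=M_2(F)$ (unital): by Amitsur--Levitzki it satisfies $St_4$, but $Cap_4\notin\id(M_2(F))$, so by the strip theorem its cocharacters are \emph{not} contained in the strip of height $3$. The fix is simply not to specialize at all: $Cap_{k+1}$, with the $y_i$ kept as free variables, lies in $F\langle X\rangle\subseteq\WX$, and by the very compatibility of evaluations you use in the other direction, $Cap_{k+1}\in\gid(A)$ is equivalent to $Cap_{k+1}\in\id(A)$. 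Then Regev's theorem (applied to the genuine Capelli identity) puts the ordinary multiplicities in $H(k,0)$, and Lemma \ref{lem: multiplicity bound} (equivalently the first item of Theorem \ref{th: good_hook}, whose PI hypothesis holds since $Cap_{k+1}$ is a nontrivial ordinary identity) kills $m_\lambda$ for $h(\lambda)>k$, because every $\mu\supseteq\lambda$ then also has height $>k$. With that correction your argument coincides with the paper's proof.
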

\begin{proof}
	By Equation \eqref{eq: multiplicity bound}, the generalized cocharacters are in $H(k,0)$ if and only if the ordinary cocharacters are in $H(k,0)$, and this is equivalent to $Cap_{k+1}\in \id(A)$, by the strip theorem \cite{Reg79}.
\end{proof}

Now, we turn our attention to arbitrary $W$-algebras, possibly without unity. To deal with the generic case we need to introduce the semidirect product.

\begin{definition}
	Let $A$ be a $W$-algebra. The semidirect product between $W$ and $A$, denoted by $\wa$, is the algebra defined on the direct sum of vector spaces $A\oplus W$ with multiplication given by
	\[ (a_1,w_1)(a_2,w_2) = (a_1a_2 + w_1a_2 +a_1w_2, w_1w_2) \]
    It is naturally a $W$-algebra with respect to left and right multiplication.
\end{definition}

The semidirect product $\wa$ is naturally a $W$-algebra with action given by
\[ (a,w_1)w_2=(aw_2,w_1w_2),\qquad w_2(a,w_1)=(w_2a,w_2w_1)  \]
for any $a\in A$, $w_1,w_2\in W$. Moreover, we can identify both $A$ and $W$ with suitable subalgebras of $\wa$ in a natural way and, with this identification, $A$ is an ideal. This can be formally expressed trough the following split short exact sequence of algebras: 
\[ \begin{tikzcd}
    0 \arrow[r] & A \arrow[r] & \wa \arrow[r,shift left] & W \arrow[l, shift left] \arrow[r]  & 0
\end{tikzcd} \]
The semidirect product $\wa$ has a unity, corresponding to $(0,1_W)$. Note that the algebra of generalized polynomials with constant term is precisely $\WX^+=\WX\rtimes W$.

\begin{lemma}\label{lem: wa is PI}
	Let $A$ be a $W$-algebra. Then $\wa$ is $\Polid$ if and only if $A$ is $\Polid$, and in this case $\id(\wa)\subseteq \id(A)$. 
	
	Similarly, $\wa$ is $\gpi$ if and only if $A$ is $\gpi$ and $\gid(\wa)\subseteq \gid(A)$.
\end{lemma}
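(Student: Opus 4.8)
The plan is to extract everything from the split short exact sequence $0\to A\to\wa\to W\to 0$. For the inclusions, the key observation is that $\iota\colon A\to\wa$, $a\mapsto(a,0)$, is an injective homomorphism of $W$-algebras: directly from the formulas for the product on $\wa$ and for its $W$-action one checks $\iota(a_1a_2)=\iota(a_1)\iota(a_2)$, $\iota(wa)=w\iota(a)$ and $\iota(aw)=\iota(a)w$, so $\iota(A)$ is a $W$-subalgebra of $\wa$. Hence any generalized polynomial vanishing on all of $\wa$ (with the $\wa$-operations) vanishes in particular on $\iota(A)$, where the operations are $\iota$ of those of $A$; this gives $\gid(\wa)\subseteq\gid(A)$, and likewise $\id(\wa)\subseteq\id(A)$. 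In particular, a nontrivial (generalized) identity of $\wa$ is a nontrivial (generalized) identity of $A$, which proves the ``only if'' direction of both equivalences.

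For the ``if'' direction I would use a composition-of-identities argument, built on the complementary fact that $\pi\colon\wa\to W$, $(a,w)\mapsto w$, is a surjective homomorphism of $W$-algebras with $\ker\pi=\iota(A)$, so $\wa/\iota(A)\cong W$. Since $W$ is finite dimensional of dimension $d$, it satisfies a nonzero multilinear \emph{ordinary} identity $f$ of some degree $p$ (for instance the standard identity of degree $d+1$, which vanishes on any $p=d+1$ linearly dependent elements); as $\pi$ is an algebra homomorphism and $f$ vanishes on $W$, we get $f(r_1,\dots,r_p)\in\ker\pi=\iota(A)$ for all $r_1,\dots,r_p\in\wa$. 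Now assume $A$ is $\gpi$; since $F$ has characteristic zero and $\gid(A)\neq0$, some multilinear component $gP_q\cap\gid(A)$ is nonzero, so pick a nonzero $g\in gP_q\cap\gid(A)$ and set
\[
h(x_1,\dots,x_{pq})=g\bigl(f(x_1,\dots,x_p),\ f(x_{p+1},\dots,x_{2p}),\ \dots,\ f(x_{(q-1)p+1},\dots,x_{pq})\bigr),
\]
the image of $g$ under the $W$-algebra homomorphism $W\langle z_1,\dots,z_q\rangle\to\WX$ sending $z_j$ to the $j$-th block $f(x_{(j-1)p+1},\dots,x_{jp})$. Evaluating $h$ at $r_1,\dots,r_{pq}\in\wa$ gives $g(u_1,\dots,u_q)$ with each $u_j=f(r_{(j-1)p+1},\dots,r_{jp})\in\iota(A)$; since $\iota(A)$ is a $W$-subalgebra and $g$ is a multilinear element of $\gid(A)$, this vanishes, so $h\in\gid(\wa)$. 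The $\Polid$ case is the same argument with $g$ chosen to be a nonzero multilinear ordinary identity of $A$, which then makes $h\in\FX$ and $h\in\id(\wa)$.

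The one delicate point — and the step I expect to be the main obstacle — is to check that $h$ is a \emph{nonzero} polynomial, so that the identity it produces is nontrivial. Fixing a basis $w_1,\dots,w_d$ of $W$ and expanding $g$ in the monomial basis of $gP_q$, I would pick a basis monomial $m=w_{i_0}z_{\sigma(1)}w_{i_1}z_{\sigma(2)}\cdots w_{i_{q-1}}z_{\sigma(q)}w_{i_q}$ occurring in $g$ with nonzero coefficient $c$, and expand each block $f(x_{(j-1)p+1},\dots,x_{jp})$ in its (pairwise distinct) monomials. Because every $z$-slot is replaced by a nonempty string of ordinary variables, the basis elements $w_{i_0},\dots,w_{i_q}$ are never multiplied with one another, so every resulting term of $h$ is a genuine basis monomial of the free $W$-algebra; moreover from such a monomial one reads back off the permutation (which block sits in each slot), the indices $i_0,\dots,i_q$, and the monomial selected inside each block, so distinct choices contribute distinct, non-cancelling monomials of $h$. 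Choosing inside each block a monomial of $f$ with nonzero coefficient then yields a monomial of $h$ whose coefficient is a nonzero multiple of $c$, hence nonzero, so $h\neq0$. (In the ordinary case this bookkeeping is just the classical fact that a composition of nonzero multilinear polynomials in disjoint sets of variables is nonzero.) Combined with the first paragraph, this establishes both equivalences and the inclusions of identity ideals.
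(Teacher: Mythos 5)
Your proof is correct and follows essentially the same route as the paper: identities of $\wa$ restrict to $A\cong\iota(A)\subseteq\wa$, and conversely a nonzero multilinear (generalized) identity $g$ of $A$ composed, in disjoint variables, with a multilinear ordinary identity of $W$ — whose evaluations on $\wa$ land in $A=\ker(\wa\rightarrow W)$ — gives a nontrivial identity of $\wa$. The only differences are cosmetic: the paper substitutes the Capelli polynomial $Cap_{d+1}$ where you use the standard identity of degree $d+1$, and you additionally spell out the nontriviality of the composed polynomial, which the paper leaves implicit as a standard fact about multilinear compositions.
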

\begin{proof}
	Clearly, $\id(\wa)\subseteq \id(A)$ (resp. $\gid(\wa)\subseteq \gid(A)$) since $A$ is isomorphic to a subalgebra of $\wa$. We need to prove that when $A$ is $\Polid$ (resp. $\gpi$), then $\wa$ has the same property. Let $f(x_1,\dots,x_n)$ be a non trivial ordinary (resp. generalized) polynomial identity of $A$. Recall that $W$ is finite dimensional, say $\dim(W)=d$, so the Capelli polynomial $Cap_{d+1}$ is an identity of $W$. Therefore, every evaluation of $Cap_{d+1}$ in $\wa$ has image in $A$. We conclude that the composition $f(Cap_{d+1},\dots,Cap_{d+1})$ is a polynomial identity of $\wa$.
\end{proof}

In the next theorem we generalize the hook theorem to arbitrary $W$-algebras. Note that in this case we do not have any explicit relationship between the hook given by ordinary cocharacters and the one given by the generalized cocharacters. We also obtain the generalization of the other bounds and show that they are equivalent to the existence of an ordinary identity. This theorem is the direct analogue of \cite[Theorems 14, 29]{Ber96} in the context of generalized identities.

\begin{theorem}\label{th: hook}
	Let $A$ be a $W$-algebra. The following are equivalent: \begin{enumerate}
		\item $A$ satisfies an ordinary polynomial identity, \label{it: 0}
		\item the generalized cocharacter multiplicities of $A$ are contained in a hook, \label{it: 1}
		\item the generalized colength sequence of $A$ is polynomially bounded, \label{it: 2}
		\item the generalized multilinear codimension sequence of $A$ is exponentially bounded, \label{it: 3}
		\item the generalized homogeneous codimension sequences of $A$ are polynomially bounded. \label{it: 4}
	\end{enumerate}
\end{theorem}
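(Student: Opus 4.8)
The plan is to prove the cycle of implications $\eqref{it: 0}\Rightarrow\eqref{it: 1}\Rightarrow\eqref{it: 2}\Rightarrow\eqref{it: 3}$, then close the loop with $\eqref{it: 3}\Rightarrow\eqref{it: 0}$ and $\eqref{it: 1}\Rightarrow\eqref{it: 4}\Rightarrow\eqref{it: 0}$, reducing everything whenever possible to the unital case already handled in Theorem \ref{th: good_hook} via the semidirect product $\wa$. The key observation is Lemma \ref{lem: wa is PI}: $\wa$ is a unital $W$-algebra, it is $\Polid$ exactly when $A$ is, and $\gid(\wa)\subseteq\gid(A)$. Consequently $gc_n(A)\leq gc_n(\wa)$, $gl_n(A)\leq gl_n(\wa)$, and $gC_k^n(A)\leq gC_k^n(\wa)$ for all $n,k$, and moreover every partition $\lambda$ with $m_\lambda(A)\neq 0$ also satisfies $m_\lambda(\wa)\neq 0$ (since a nonzero $S_n$-map from an $\cS_\lambda$-isotypic quotient of $gP_n(\wa)$ onto the corresponding quotient of $gP_n(A)$ shows the multiplicity cannot drop). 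So it suffices to establish $\eqref{it: 0}\Rightarrow\eqref{it: 1},\eqref{it: 2},\eqref{it: 3},\eqref{it: 4}$ for $\wa$ in place of $A$, and this is exactly Theorem \ref{th: good_hook} applied to the unital $\Polid$ $W$-algebra $\wa$ (which is $\Polid$ by Lemma \ref{lem: wa is PI} since $A$ is).

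For the implications $\eqref{it: 1}\Rightarrow\eqref{it: 0}$, $\eqref{it: 2}\Rightarrow\eqref{it: 0}$, $\eqref{it: 3}\Rightarrow\eqref{it: 0}$ and $\eqref{it: 4}\Rightarrow\eqref{it: 0}$ we argue contrapositively: if $A$ satisfies no ordinary polynomial identity, then $\id(A)\cap P_n=0$ for all $n$, so the ordinary multilinear codimension is $c_n(A)=n!$ and the ordinary cocharacter of $A$ in degree $n$ is the full regular character $\sum_{\lambda\vdash n} d_\lambda\chi_\lambda$ (all multiplicities $a_\lambda=d_\lambda>0$). Since $\id(A)\subseteq\gid(A)$ and the ordinary multilinear polynomials $P_n$ sit inside $gP_n$, the image of $P_n$ in $gP_n(A)$ is the full $n!$-dimensional regular $S_n$-module, hence $gc_n(A)\geq n!$ is not exponentially bounded, $gl_n(A)\geq p(n)$ (the number of partitions) is not polynomially bounded, and every $m_\lambda\geq a_\lambda=d_\lambda>0$, so the generalized cocharacters cannot be contained in any hook (a hook excludes, e.g., $\lambda=(1^n)$ once $n>k+l$, yet $m_{(1^n)}>0$). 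The homogeneous statement $\eqref{it: 4}\Rightarrow\eqref{it: 0}$ follows likewise: if $A$ satisfies no ordinary identity then already the ordinary homogeneous codimensions $C_k^n(A)=\binom{n+k-1}{?}$-type counts grow faster than any polynomial in $n$ for $k\geq 2$, and $W_k^{(n)}$ contains the ordinary homogeneous polynomials whose image in $W_k^{(n)}(A)$ is then of maximal dimension; more cleanly, $\eqref{it: 4}$ implies $\eqref{it: 2}$ since $gl_n(A)$ is controlled by the generalized cocharacter multiplicities which already appear with the same values in the homogeneous cocharacters $\phi_k^n(A)$ for $k\geq n$ by Theorem \ref{th: Hilbert}, so $\eqref{it: 4}\Rightarrow\eqref{it: 2}\Rightarrow\eqref{it: 0}$.

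I would actually present it most economically as: $\eqref{it: 0}\Rightarrow$ (all of $\eqref{it: 1}$--$\eqref{it: 4}$) via $\wa$ and Theorem \ref{th: good_hook}; and each of $\eqref{it: 1}$--$\eqref{it: 4}$ $\Rightarrow\eqref{it: 0}$ by the single contrapositive observation that failure of $\eqref{it: 0}$ forces $gP_n(A)\supseteq P_n\cong F[S_n]$ (regular representation), which simultaneously violates $\eqref{it: 1}$ (multiplicities unbounded across hooks), $\eqref{it: 2}$ ($gl_n\geq p(n)$), $\eqref{it: 3}$ ($gc_n\geq n!$), and, passing to $k\geq 2$ homogeneous components, $\eqref{it: 4}$. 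The main obstacle — and the point needing care — is the monotonicity claim $m_\lambda(A)\leq m_\lambda(\wa)$ and the codimension inequalities under the inclusion $\gid(\wa)\subseteq\gid(A)$: one must check that the natural surjection $gP_n(\wa)=gP_n/(gP_n\cap\gid(\wa))\twoheadrightarrow gP_n/(gP_n\cap\gid(A))=gP_n(A)$ is a well-defined $S_n$-equivariant map (immediate from $\gid(\wa)\subseteq\gid(A)$ and $S_n$-invariance of both ideals), whence by complete reducibility in characteristic zero each isotypic multiplicity can only decrease; this is routine but is the hinge of the whole argument. The rest is bookkeeping with the already-proved Theorem \ref{th: good_hook}, Lemma \ref{lem: wa is PI}, and Theorem \ref{th: Hilbert}.
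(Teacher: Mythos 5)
Correct, and essentially the paper's own proof: your forward direction passes to the unital semidirect product $\wa$ via Lemma \ref{lem: wa is PI} and Theorem \ref{th: good_hook}, transferring hook, colength and codimension bounds through $\gid(\wa)\subseteq\gid(A)$ by the $S_n$-equivariant surjection $gP_n(\wa)\twoheadrightarrow gP_n(A)$, and your converse is the paper's observation that absence of an ordinary identity puts the free algebra inside $\F(A)$ (in your formulation: $P_n\cong F[S_n]$ injects into $gP_n(A)$, and for $k\geq 2$ the $k^n$-dimensional ordinary homogeneous components inject into $W_k^{(n)}(A)$), which violates all four conditions at once. The only part to discard is the parenthetical ``more cleanly'' claim that the homogeneous-codimension condition implies the colength condition: the polynomial bound there is allowed to depend on $k$, so the comparison $gC_n^n(A)\geq gl_n(A)$ would require letting $k$ grow with $n$ and does not follow; your direct argument that $F\langle x_1,\dots,x_k\rangle$ forces $gC_k^n(A)\geq k^n$ for $k\geq 2$ is the correct (and the paper's implicit) way to handle that item.
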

\begin{proof}
	Assume that $A$ satisfies an ordinary polynomial identity. Then, by Lemma \ref{lem: wa is PI}, the semidirect product $\wa$ is $\Polid$, so it satisfies the hypothesis of Theorem \ref{th: good_hook}. In particular, the generalized cocharacter multiplicities of $\wa$ are contained in a hook. On the other hand, $\gid(\wa)\subseteq \gid(A)$ implies that the generalized cocharacter multiplicities of $A$ are smaller than those of $\wa$, therefore they are contained in a hook, too. Similarly, one can prove the other bounds.
	
    On the converse, if $A$ does not satisfy any ordinary polynomial identity, then the free algebra $F\langle X\rangle$ is contained in the relatively free algebra $\F(A)=\WX /\gid(A)$. But $F\langle X\rangle$ does not satisfy any of the conditions \ref{it: 1}, \ref{it: 2}, \ref{it: 3} or \ref{it: 4}, so $A$ does not satisfy them, too.
\end{proof}

To obtain a generalization of the strip theorem for arbitrary $W$-algebras we will use a direct proof involving the representation theory of the symmetric group. We recall the branching theorem for the irreducible representations of $S_n$. 

\begin{theorem}[Theorem 2.3.1 of \cite{GZ05}] \label{th: branching} 
    Consider the natural embedding of $S_n$ in $S_{n+1}$ as the subgroup fixing $n+1$. Then, \begin{itemize}
    \item if $\lambda\vdash n$, then $\cS_\lambda^{\uparrow S_{n+1}}\cong \sum_{\mu\in\lambda^+}\cS_\mu$ where $\lambda^+$ is the set of all partitions of $n+1$ whose Young diagram is obtained by adding one box to the Young diagram of $\lambda$;
    \item if $\lambda\vdash n+1$, then $\cS_\lambda^{\downarrow S_{n}}\cong \sum_{\mu\in\lambda^-}\cS_\mu$ where $\lambda^-$ is the set of all partitions of $n$ whose Young diagram is obtained by removing one box to the Young diagram of $\lambda$.
    \end{itemize}
\end{theorem}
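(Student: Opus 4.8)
The plan is to derive the branching rule from the standard correspondence between the representation theory of the symmetric groups and the algebra of symmetric functions; none of the $W$-algebra apparatus of the previous sections is needed. Since this is a classical fact, in the paper it is simply quoted from \cite[Theorem 2.3.1]{GZ05}, but for completeness here is the argument I would give.

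First I would recall the Frobenius characteristic map $\mathrm{ch}\colon\bigoplus_{n\ge 0}R(S_n)\to\Lambda$, where $R(S_n)$ denotes the representation ring of $S_n$ over $F$ (which, $F$ having characteristic zero, behaves exactly as over $\mathbb{C}$) and $\Lambda$ is the ring of symmetric functions. The map $\mathrm{ch}$ is an isometry of graded rings sending the class of $\cS_\lambda$ to the Schur function $s_\lambda$, where the multiplication on $\bigoplus R(S_n)$ is the induction product $[M]\cdot[N]=[\,\mathrm{Ind}_{S_n\times S_m}^{S_{n+m}}(M\boxtimes N)\,]$. As $\cS_{(1)}$ is the (trivial) one-dimensional $S_1$-module and $\mathrm{ch}(\cS_{(1)})=s_{(1)}=h_1$, for $\lambda\vdash n$ we obtain $\mathrm{ch}\big(\cS_\lambda^{\uparrow S_{n+1}}\big)=\mathrm{ch}\big(\mathrm{Ind}_{S_n\times S_1}^{S_{n+1}}(\cS_\lambda\boxtimes\cS_{(1)})\big)=s_\lambda\,h_1$.

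Second I would apply Pieri's rule, $s_\lambda h_1=\sum_\mu s_\mu$, the sum taken over all partitions $\mu$ whose Young diagram is obtained from that of $\lambda$ by adding a single box, i.e.\ $\mu\in\lambda^+$. Since $\mathrm{ch}$ is a bijection on each graded component, inverting it gives $\cS_\lambda^{\uparrow S_{n+1}}\cong\sum_{\mu\in\lambda^+}\cS_\mu$, the first assertion. For the restriction statement I would use Frobenius reciprocity together with the fact that in characteristic zero a finite-dimensional $S_n$-module is determined by its character, equivalently by the multiplicities of the $\cS_\mu$ in it: for $\lambda\vdash n+1$ and $\mu\vdash n$,
\[
\langle\,\cS_\lambda^{\downarrow S_n},\,\cS_\mu\,\rangle_{S_n}=\langle\,\cS_\lambda,\,\cS_\mu^{\uparrow S_{n+1}}\,\rangle_{S_{n+1}},
\]
which by the first part equals $1$ if $\lambda\in\mu^+$ and $0$ otherwise; and $\lambda\in\mu^+$ holds precisely when $\mu\in\lambda^-$. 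Hence $\cS_\lambda^{\downarrow S_n}\cong\sum_{\mu\in\lambda^-}\cS_\mu$.

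The only genuine point requiring care is the combinatorial bookkeeping: one must check that the partitions appearing in Pieri's rule are exactly those in $\lambda^+$, and that the add-a-box and remove-a-box operations are mutually inverse in the sense $\lambda\in\mu^+\Leftrightarrow\mu\in\lambda^-$. Every other ingredient — the characteristic map being an isometry of rings, Pieri's rule, Frobenius reciprocity, and semisimplicity in characteristic zero — is standard and can be cited verbatim from \cite{GZ05} or \cite{Mac98}. An alternative, slightly longer route avoiding symmetric functions altogether is to work directly with the combinatorics of polytabloids and Young subgroups to decompose $\mathrm{Ind}_{S_n}^{S_{n+1}}\cS_\lambda$, but the characteristic-map proof above is the most economical.
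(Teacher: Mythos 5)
Your argument is correct. The paper itself offers no proof of this statement: it is quoted verbatim as a classical background fact from \cite[Theorem 2.3.1]{GZ05}, so there is nothing to compare against beyond the citation. Your derivation via the Frobenius characteristic map (induction product $\mapsto$ product of symmetric functions, $\mathrm{ch}(\cS_\lambda)=s_\lambda$), Pieri's rule $s_\lambda h_1=\sum_{\mu\in\lambda^+}s_\mu$, and Frobenius reciprocity together with semisimplicity in characteristic zero is a standard and complete route; the two bookkeeping points you flag (that Pieri's rule produces exactly the partitions in $\lambda^+$, and that $\lambda\in\mu^+\Leftrightarrow\mu\in\lambda^-$) are immediate, so the proof stands as written.
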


We also need to introduce the following set of polynomials:

\begin{definition}
	The generalized Capelli set of rank $m$ in the free $W$-algebra $\WX$ is the set of all the polynomials obtained from $ Cap_m(x_1,\ldots,x_m; y_1,\ldots,y_{m+1})$ by eventually setting the variables $y_i$ equal to some element $w\in W$ in all possible ways.

    The $W$-algebra $A$ satisfies the generalized Capelli set of rank $m$ if every polynomial of this set is a generalized identity of $A$.
\end{definition}

A simple observation is that every generalized polynomial alternating in $m$ variables is a consequence of the generalized Capelli set of rank $m$ (see \cite[Proposition 1.5.5]{GZ05} for the proof in the ordinary case).
Note that if $A$ has a unity, then it satisfies the $m$-th Capelli set if and only if $Cap_m$ is a polynomial identity. So the next theorem generalizes Corollary \ref{cor: hook_unity}.

\begin{theorem} \label{Striptheorem}
    Let $A$ be a $W$-algebra. The generalized cocharacters of $A$ are contained in a strip of height $k$ if and only if $A$ satisfies the generalized Capelli set of rank $k+1$.
\end{theorem}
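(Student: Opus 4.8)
The plan is to route both implications through the intermediate condition that \emph{every multilinear generalized polynomial alternating in at least $k+1$ of its variables lies in $\gid(A)$}. The first step is the elementary equivalence: $A$ satisfies the generalized Capelli set of rank $k+1$ if and only if this condition holds. Indeed, by the remark preceding the theorem every generalized polynomial alternating in $k+1$ variables is a consequence of the generalized Capelli set of rank $k+1$, and $\gid(A)$ is a $T_W$-ideal, hence closed under consequences; conversely, each polynomial of the generalized Capelli set of rank $k+1$ is itself multilinear and alternating in $x_1,\dots,x_{k+1}$. A polynomial alternating in $h\geq k+1$ of its variables is in particular alternating in $k+1$ of them, since $S_{k+1}\leq S_h$, so the two formulations agree. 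From this point on the algebra $W$ plays no further role: the work takes place entirely in the $S_n$-module $gP_n(A)=gP_n/\bigl(gP_n\cap\gid(A)\bigr)$, and the two devices that translate between ``alternating polynomials are identities'' and ``$m_\lambda=0$ whenever $h(\lambda)>k$'' are Young symmetrizers and the branching rule.

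For the implication ``$A$ satisfies the generalized Capelli set of rank $k+1$'' $\Rightarrow$ ``the generalized cocharacters lie in the strip of height $k$'', fix $\lambda\vdash n$ with $h(\lambda)>k$ and a Young tableau $T_\lambda$, and write $e_{T_\lambda}=\bigl(\sum_{\sigma\in R_{T_\lambda}}\sigma\bigr)\bigl(\sum_{\tau\in C_{T_\lambda}}(-1)^\tau\tau\bigr)$. For every $f\in gP_n$, the generalized polynomial $\sum_{\tau\in C_{T_\lambda}}(-1)^\tau(\tau\ast f)$ is alternating in the variables indexed by the first column of $T_\lambda$, a column of length $h(\lambda)\geq k+1$; by the equivalence above it lies in $\gid(A)$, hence so does $e_{T_\lambda}\ast f$. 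Therefore $e_{T_\lambda}\ast gP_n(A)=0$, and since, for an $S_n$-module $M$, the multiplicity of $\cS_\lambda$ in $M$ vanishes precisely when $e_{T_\lambda}\ast M=0$ (because $e_{T_\lambda}$ is a scalar multiple of a primitive idempotent generating $\cS_\lambda$, and $F[S_n]e_{T_\lambda}\cong\cS_\lambda$), we conclude $m_\lambda=0$.

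For the converse, assume $m_\lambda=0$ whenever $h(\lambda)>k$, and let $f=f(x_1,\dots,x_n)\in gP_n$ be multilinear and alternating in $x_1,\dots,x_{k+1}$. Regard $S_{k+1}$ as the subgroup of $S_n$ permuting the first $k+1$ indices; then the image $\bar f\in gP_n(A)$ satisfies $\tau\ast\bar f=(-1)^\tau\bar f$ for all $\tau\in S_{k+1}$, so if $\bar f\neq 0$ then $F[S_{k+1}]\bar f\cong\cS_{(1^{k+1})}$ and $\cS_{(1^{k+1})}$ occurs in $\bigl(gP_n(A)\bigr)^{\downarrow S_{k+1}}=\bigoplus_{\mu\vdash n}m_\mu\,\cS_\mu^{\downarrow S_{k+1}}$. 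Iterating Theorem~\ref{th: branching}, $\cS_{(1^{k+1})}$ can occur in $\cS_\mu^{\downarrow S_{k+1}}$ only when $\mu$ is obtained from the one-column partition $(1^{k+1})$ by successively adding boxes; in particular $(1^{k+1})\subseteq\mu$, so $h(\mu)\geq k+1$, contradicting $m_\mu=0$. Hence $\bar f=0$, that is, $f\in\gid(A)$; applying this to the polynomials of the generalized Capelli set of rank $k+1$ yields the claim.

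The step I expect to demand the most care is the representation-theoretic bridge used in the second paragraph: the fact that, for an $S_n$-module $M$, one has $e_{T_\lambda}\ast M=0$ if and only if $\cS_\lambda$ does not occur in $M$. I would isolate this as a preliminary observation, since it is exactly where the quasi-idempotence of $e_{T_\lambda}$ and the isomorphism $F[S_n]e_{T_\lambda}\cong\cS_\lambda$ come in; everything else is a faithful transcription of the classical strip-theorem argument into the generalized setting.
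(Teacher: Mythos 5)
Your proof is correct, and it is worth recording how it relates to the paper's argument. Your converse is essentially the paper's converse viewed through Frobenius reciprocity: the paper takes a Capelli-set polynomial $f$, notes that $F[S_{k+1}]f$ is the sign representation, and induces up, using the branching theorem to see that every irreducible constituent of $F[S_n]f$ has height at least $k+1$; you instead restrict $gP_n(A)$ down to $S_{k+1}$ and observe that a nonzero alternating image would force the sign representation to occur in some $\cS_\mu^{\downarrow S_{k+1}}$ with $m_\mu\neq 0$, hence $(1^{k+1})\subseteq\mu$ --- the same branching computation in mirror form. The genuine divergence is in the forward direction. The paper fixes an irreducible $M\cong\cS_\lambda$ with $h(\lambda)\geq k+1$ inside $gP_n(A)$, restricts to $S_{k+1}$ and uses branching to extract a nonzero element alternating in $k+1$ variables, which must vanish because it is a consequence of the Capelli set (this implicitly uses that such an element of the quotient lifts to an alternating polynomial in $gP_n$). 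You instead work upstairs in $gP_n$ with the Young symmetrizer: the column antisymmetrizer of any tableau of shape $\lambda$ with $h(\lambda)>k$ produces a multilinear polynomial alternating in the first-column variables, hence an identity, so $e_{T_\lambda}\ast gP_n(A)=0$, and you conclude $m_\lambda=0$ from the standard criterion that, in characteristic zero, $m_\lambda=\dim e_{T_\lambda}\ast M$ (equivalently $e_{T_\lambda}$ kills every $\cS_\mu$ with $\mu\neq\lambda$ and acts nontrivially on $\cS_\lambda$). This buys you a forward direction that needs no branching and no lifting of elements from the quotient, at the price of invoking the quasi-idempotent multiplicity criterion, which the paper never states; you correctly flag that this should be isolated and proved (via $\operatorname{Hom}_{S_n}(F[S_n]e_{T_\lambda},M)\cong e_{T_\lambda}\ast M$ and $e_{T_\lambda}^2=c_\lambda e_{T_\lambda}$ with $c_\lambda\neq 0$). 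The paper's version, by contrast, keeps both directions uniformly dependent on Theorem \ref{th: branching}. Your preliminary reduction of the Capelli-set hypothesis to ``every multilinear generalized polynomial alternating in $k+1$ variables is in $\gid(A)$'' uses exactly the observation the paper records before the theorem, so no new ingredient is needed there.
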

\begin{proof}
    Assume that $A$ satisfies the Capelli set of rank $k+1$ and fix an irreducible module $M\subseteq gP_n(A)$ such that $M\cong\cS_\lambda$ with $h(\lambda) \geq k+1$. Let $M^{\downarrow S_{k+1}}$ be the restriction  of $M$ to the symmetric group $S_{k+1}$ acting on the first $k+1$ variables. By iteratively applying the Branching Theorem \ref{th: branching}, we see that $M^{\downarrow S_{k+1}}$ contains, with some non zero multiplicity, an $S_{k+1}$-irreducible submodule corresponding to a column of height $k+1$, which is precisely the alternating representation of $S_{k+1}$. In other words, there exists a non zero polynomial $f\in M$ alternating in a set of $k+1$ variables. Therefore, $f$ is a consequence of the generalized Capelli set of rank $k+1$ and, by the irreducibility of $M$, we have $M\subseteq\gid(A)$. Therefore we proved that $m_\lambda=0$ for each partition $\lambda$ of height greater than or equal to $k$.

    Conversely, assume that the generalized cocharacters sequence of $A$ lies in the strip of height $k$. Let $f$ be a polynomial in the Capelli set of rank $k+1$ and assume that $f$ has $n$ variables. Clearly, $k+1\leq n\leq 2k+3$ and $f$ is alternating in the set of $k+1$ variables $\{x_1,\dots,x_{k+1}\}$. Therefore, $F[S_{k+1}]f$ is isomorphic to the alternating representation of $S_{k+1}$. By iteratively applying the branching theorem, all the irreducible submodules of $F[S_n]f\cong(F[S_{k+1}]f)^{\uparrow S_n}$ with non zero multiplicity have height at least $k+1$. So $F[S_n]f \subseteq \gid(A)$ and $f$ is a generalized polynomial identity of $A$.
\end{proof}

If $A$ is a finite dimensional $W$-algebra with $\dim_F A=n$, then $A$ satisfies the Capelli identity of rank $n+1$. Therefore, by Theorem $\ref{Striptheorem}$, we get 
\begin{corollary}
	Let $A$ be a finite dimensional $W$-algebra, $\dim_F A=k$. Then for any $n \geq 1$, 
	\[
	\chi_n(A)=\sum_{\substack{\lambda \vdash n \\ h(\lambda) \leq k}}m_\lambda \chi_\lambda,
	\]
	i.e., the $n$-th generalized cocharacter of $A$ lies in a strip of height $k$.
\end{corollary}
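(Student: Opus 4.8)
The plan is to deduce this directly from Theorem \ref{Striptheorem}: it suffices to show that a finite dimensional $W$-algebra $A$ with $\dim_F A=k$ satisfies the generalized Capelli set of rank $k+1$, and then the strip containment follows immediately with strip height $k$ and Capelli rank $k+1$. So the whole argument reduces to verifying the hypothesis of that theorem.

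First I would record the elementary linear-algebra fact underlying the classical statement: if $V$ is an $F$-vector space of dimension $k$ and $g\colon V^{k+1}\to V$ is $F$-multilinear and alternating in its $k+1$ arguments, then $g\equiv 0$. Indeed, any $k+1$ vectors $v_1,\dots,v_{k+1}\in V$ are linearly dependent, and substituting a nontrivial dependency relation $\sum_i c_i v_i=0$ into the multilinear alternating expression $g$ forces $g(v_1,\dots,v_{k+1})=0$. This is exactly the reason why any finite dimensional algebra of dimension $k$ satisfies the ordinary Capelli identity of rank $k+1$, as noted in the remark preceding the corollary.

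Next I would apply this to the generalized setting. Every polynomial $f$ in the generalized Capelli set of rank $k+1$ is, by definition, obtained from $Cap_{k+1}(x_1,\dots,x_{k+1};y_1,\dots,y_{k+2})$ by specializing each $y_i$ to some fixed element of $W$. Since $Cap_{k+1}$ is multilinear and alternating in the variables $x_1,\dots,x_{k+1}$, and this specialization touches only the $y$-variables, the resulting $f$ is still multilinear and alternating in $x_1,\dots,x_{k+1}$. Evaluating $f$ on $A$ with the $W$-coefficients treated as constants, the map $(a_1,\dots,a_{k+1})\mapsto f(a_1,\dots,a_{k+1})$ is $F$-multilinear and alternating on $A^{k+1}$, hence identically zero by the previous step. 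Thus $f\in\gid(A)$, and since $f$ was an arbitrary element of the generalized Capelli set of rank $k+1$, the algebra $A$ satisfies that set.

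Finally, invoking Theorem \ref{Striptheorem} with this data, the generalized cocharacter of $A$ is contained in the strip of height $k$, i.e. $g\chi_n(A)=\sum_{\lambda\vdash n,\ h(\lambda)\le k}m_\lambda\chi_\lambda$ for every $n\ge 1$, which is the claim. There is no real obstacle in this argument; the only point needing a word of care is that the $y_i$-specialization preserves the property of being alternating in the $x$-variables, which is immediate from the construction of the generalized Capelli set, so effectively the corollary is just the combination of the standard dimension bound with Theorem \ref{Striptheorem}.
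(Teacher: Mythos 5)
Your proposal is correct and follows the same route as the paper: observe that a $k$-dimensional $W$-algebra satisfies the (generalized) Capelli set of rank $k+1$ by the standard alternating/multilinear dimension argument, then invoke Theorem \ref{Striptheorem}. In fact you are slightly more careful than the paper's one-line justification, since you explicitly check that specializing the $y_i$ to elements of $W$ preserves alternation in the $x$-variables, so the full generalized Capelli set (which is what Theorem \ref{Striptheorem} requires) is satisfied, not merely the ordinary Capelli identity.
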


\section{Generators of varieties of $W$-algebras}

This section is devoted to the proof of some important corollaries of the strip theorem and of the hook theorem. The celebrated Kemer's representability theorem \cite{Kem84} states that every ordinary PI-algebra is PI-equivalent to the Grassmann envelope of a finite dimensional superalgebra. Moreover, if the algebra satisfies a Capelli's identity, then it is PI-equivalent to a finite dimensional algebra. This result has been extended to algebras with an $H$-module action of some finite dimensional semisimple Hopf algebra $H$ \cite{Kar16}. The first step in the proof of the representability theorems is the reduction of the problem to the case of finitely generated algebras. More precisely, one can prove that every variety of PI-algebras is generated by the Grassmann envelope of a finitely generated algebra and this is a corollary of the hook theorem. We wish to extend this result to the setting of $W$-algebras and generalized identities.
To reach this goal we need to introduce $W$-superalgebras and their Grassmann envelopes. Recall that an associative superalgebra (or $\mathbb{Z}_2$-graded algebra) is an associative algebra $A$ with a vector space decomposition $A=A_0\oplus A_1$ such that $A_0^2+A_1^2\subseteq A_0$ and $A_0A_1+A_1A_0\subseteq A_1$. 

\begin{definition}
	A $W$-superalgebra is a superalgebra $A=A_0 \oplus A_1$ endowed with a $W$-action such that $A_0W + WA_0\subseteq A_0$ and $A_1W + WA_1\subseteq A_1$. That is, if $a\in A$ is an homogeneous element, then $\deg(aw)=\deg(wa)=\deg(a)$ for each $w\in W$.
\end{definition}

Fix two countable sets of variables $Y=\{y_1,y_2,\dots\}$ and $Z=\{z_1,z_2,\dots\}$. The free $W$-superalgebra is defined as the algebra $\WYZ$ endowed with the natural $W$-action and the grading induced by $\deg(y_i)=0$ and $\deg(z_i)=1$ for each $i=1,2,\dots$. The polynomial $f\in\WYZ$ is a generalized graded identity of the $W$-superalgebra $A$ if it vanishes under every admissible evaluation in $A$, that is when we evaluate every even variable $y_i$ in $A_0$ and every odd variable $z_i$ in $A_1$. We denote the $T_W$-ideal of graded identities of $A$ as $\gid_2(A)$. This allows to develop a gPI-theory for $W$-superalgebras in analogy with the ungraded case. Therefore, we can introduce the notions of variety of $W$-superalgebras, of multilinear graded generalized polynomials, of graded generalized codimensions sequence, etc. Let's also define the space $gP_{l,m}\subseteq \WYZ$, for each $l,m\geq 0$, as the space of multilinear graded generalized polynomials in the variables 
$y_1,\dots,y_l$ and $z_1,\dots,z_m$.
 
Let $A=A_0 \oplus A_1$ be a superalgebra. Let us denote with \[E(A)=(A_0 \otimes E_0) \oplus (A_1 \otimes E_1)\] the Grassmann envelope of $A$, where $E=E_0\oplus E_1$ is the Grassmann algebra. If $A$ is a $W$-superalgebra then $E(A)$ is also a $W$-superalgebra with the $W$-action given by:
\[
w*(a\otimes g)=(wa)\otimes g, \qquad (a\otimes g)*w=(aw)\otimes g,
\]
for all $w \in W, a \in A_0\cup A_1, g \in E_0\cup E_1$. 

The graded generalized identities of $A$ and of $E(A)$ are related by the linear isomorphism $\sim:gP_{l,m} \to gP_{l,m}$ defined as follows. Let $f \in gP_{l,m}$ and write $f$ as
\[
f=\sum_{\substack{\sigma \in S_m \\ V=(v_0,v_1,\ldots,v_m)}} \alpha_{\sigma,V}\ v_0z_{\sigma(1)}v_1\cdots v_{m-1}z_{\sigma(m)}v_m
\]
where $v_0,v_1,\ldots,v_m$ are generalized monomials in the variables $y_1,\ldots,y_l$ and $\alpha_{\sigma,V} \in F$. Then, $\tilde{f}$ is defined as
\[
\tilde{f}= \sum_{\substack{\sigma \in S_m \\ V=(v_0,v_1,\ldots,v_m)}} (-1)^\sigma \alpha_{\sigma,V}\ v_0z_{\sigma(1)}v_1\cdots v_{m-1}z_{\sigma(m)}v_m.
\]
As in the ordinary case, we can prove the following:
\begin{lemma}\label{3.7.4}
Let $f\in gP_{l,m}$. Then: 
  \begin{enumerate}
    \item [1)] $\tilde{\tilde{f}}=f$;
    \item [2)] $f$ is a graded generalized identity of $E(A)$ if and only if $\Tilde{f}$ is a graded generalized identity of $A$.
  \end{enumerate}
\end{lemma}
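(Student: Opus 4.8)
The plan is to mimic the classical proof (e.g.\ in \cite{GZ05}) of the analogous statement for ordinary polynomial identities, adapting every step to the $W$-generalized setting. Both claims will follow by a direct, termwise computation once we track how an admissible evaluation in $E(A)$ compares with an admissible evaluation in $A$.

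\medskip
\noindent\textbf{Part 1.} The identity $\tilde{\tilde f}=f$ is immediate from the definition: applying the tilde twice multiplies each coefficient $\alpha_{\sigma,V}$ by $(-1)^\sigma(-1)^\sigma=1$. The only point to check is that the $\sim$ operator is well defined, i.e.\ that the expression $f=\sum \alpha_{\sigma,V}\, v_0z_{\sigma(1)}v_1\cdots z_{\sigma(m)}v_m$ is a genuine normal form for elements of $gP_{l,m}$. This follows from the basis of $\WYZ$ described in the Preliminaries (monomials $w_{i_0}u_1 w_{i_1}\cdots u_n w_{i_n}$ in a fixed basis of $W$): each even block $v_j$ is a linear combination of generalized monomials in $y_1,\dots,y_l$, and the odd variables $z_{\sigma(1)},\dots,z_{\sigma(m)}$ appear exactly once, each time in position determined by $\sigma$, so the decomposition is unique. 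Hence $\sim$ is a well-defined linear involution on $gP_{l,m}$, proving part 1).

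\medskip
\noindent\textbf{Part 2.} For part 2) I would argue as follows. Fix a multilinear graded monomial $u = v_0 z_{\sigma(1)} v_1 \cdots v_{m-1} z_{\sigma(m)} v_m$ appearing in $f$, and consider an admissible evaluation of $\tilde f$ in $A$: substitute $y_i \mapsto a_i \in A_0$ and $z_j \mapsto b_j \in A_1$. Correspondingly, evaluate $f$ in $E(A)$ by substituting $y_i \mapsto a_i \otimes g_i$ with $g_i \in E_0$ and $z_j \mapsto b_j \otimes h_j$ with $h_j \in E_1$. Using the $W$-action on $E(A)$, namely $w*(a\otimes g) = (wa)\otimes g$ and $(a\otimes g)*w = (aw)\otimes g$, together with the fact that the $g_i$ are central and square to zero is irrelevant here but the $h_j$ anticommute, the value of the monomial $u$ under the $E(A)$-evaluation factors as $\big(v_0(a)\,b_{\sigma(1)}\,v_1(a)\cdots b_{\sigma(m)}\,v_m(a)\big)\otimes\big(g\cdot h_{\sigma(1)}\cdots h_{\sigma(m)}\big)$, where $g$ is the product of the (central, even) Grassmann elements coming from the $y$-substitutions and $v_j(a)$ denotes the evaluation of the even block $v_j$. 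Reordering $h_{\sigma(1)}\cdots h_{\sigma(m)}$ into $h_1\cdots h_m$ introduces exactly the sign $(-1)^\sigma$. Therefore, choosing generic Grassmann generators so that $g\cdot h_1\cdots h_m$ is a nonzero homogeneous element of $E$ (here one uses that $E_0$, $E_1$ are infinite-dimensional and $\operatorname{char} F = 0$), the $E(A)$-value of $u$ equals $(-1)^\sigma$ times the $A$-value of the corresponding even-block product tensored with a fixed nonzero Grassmann element. Summing over all monomials of $f$ and matching with the definition of $\tilde f$, we get that $f$ vanishes under this $E(A)$-evaluation if and only if $\tilde f$ vanishes under the associated $A$-evaluation. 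Since every admissible evaluation in $A$ arises this way (and, by genericity of the Grassmann generators, the converse direction is also covered), we conclude $f \in \gid_2(E(A))$ iff $\tilde f \in \gid_2(A)$.

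\medskip
The main obstacle is the bookkeeping of the $W$-factors: in the generalized setting each even block $v_j$ carries interleaved elements of $W$, and one must verify that the $W$-action on $E(A)$ is "transparent" to the Grassmann tensor factor, i.e.\ that pushing $W$-elements through the tensor $a\otimes g$ never disturbs the Grassmann component and never produces extra signs. This is exactly guaranteed by the definition of the $W$-action on $E(A)$ (it acts only on the $A$-component and preserves degree), so once this is spelled out the argument reduces to the classical sign computation. The one genuinely necessary analytic input is that the Grassmann algebra has enough generators to realize a nonzero value $g\cdot h_1\cdots h_m$, which is standard and relies on $\dim E_0 = \dim E_1 = \infty$.
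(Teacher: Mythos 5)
Your proof is correct and follows exactly the route the paper intends: the lemma is stated with ``As in the ordinary case, we can prove the following'', i.e.\ the classical Grassmann sign computation adapted from \cite{GZ05}, which is precisely what you carry out, with the additional (correct) observation that the $W$-action on $E(A)$ only touches the $A$-tensor factor and so introduces no extra signs. One cosmetic remark: the direction $\tilde f\in\gid_2(A)\Rightarrow f\in\gid_2(E(A))$ rests on multilinearity (every evaluation in $E(A)$ is a linear combination of evaluations on elementary tensors $a\otimes g$ with $g$ a Grassmann monomial), not on ``genericity'' of the generators — the distinct-generator choice is only needed for the converse implication — but your computation already yields both directions once this is said explicitly.
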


We also need the following technical result.

\begin{lemma}
    \label{2.5.6}
    Let $\lambda \vdash n, T_\lambda$ a Young tableau of shape $\lambda$ and $f=e_{T_\lambda}g$ where $g=g(x_1,\ldots,x_n)$ is some multilinear generalized polynomial in $x_1,\ldots,x_n$. If $\lambda \in H(k,l)$, then there exists a decomposition of $X^{(n)}=\{ x_1,\ldots,x_n\}$ into a disjoint union
    \begin{equation}
        \label{2.4}
        X^{(n)}=X_1 \cup \ldots \cup X_{k'}\cup Y_1 \cup \ldots \cup Y_{l'}
    \end{equation}
    with $k' \le k, l' \le l$ and a multilinear polynomial $f'=f'(x_1,\ldots,x_n)$ such that 
    \begin{itemize} 
        \item $f'$ is symmetric on the variables in each set $X_i, 1 \le i \le k'$;
        \item $f'$ is alternating on the variables in each set $Y_j, 1 \le j \le l'$;
        \item $FS_nf=FS_nf'$;
        \item the integers $k',l', |X_1|,\ldots,|X_{k'}|,|Y_1|,\ldots,|Y_{l'}|$ are uniquely determined by $\lambda$ and do not depend on the choice of the tableau $T_\lambda$;
        \item the decomposition \eqref{2.4} is uniquely defined by $T_\lambda$ and does not depend on $g$.
    \end{itemize}
\end{lemma}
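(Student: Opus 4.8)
\emph{Plan of the proof.} This is the $W$-analogue of the classical structural lemma for multilinear ordinary polynomials of the form $e_{T_\lambda}g$, and the proof is that same argument carried over verbatim. The only observation needed is that the entire discussion takes place inside the left $FS_n$-module $gP_n$: a multilinear generalized monomial is a word $w_0x_{i_1}w_1\cdots w_{n-1}x_{i_n}w_n$, and a permutation acts on it by permuting the variable slots while leaving every constant $w\in W$ fixed in its place. Thus the elements of $W$ are inert under the $S_n$-action (indeed $gP_n$ is $S_n$-isomorphic to a direct sum of copies of $FS_n$), and every manipulation of $e_{T_\lambda}$, of row symmetrizers and of column antisymmetrizers used in the ordinary proof applies word for word to generalized polynomials.

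\emph{The mechanism.} Write $e_{T_\lambda}=a_{T_\lambda}b_{T_\lambda}$, where $a_{T_\lambda}=\sum_{\sigma\in R_{T_\lambda}}\sigma$ is the product of the symmetrizers over the rows of $T_\lambda$ and $b_{T_\lambda}=\sum_{\tau\in C_{T_\lambda}}(-1)^\tau\tau$ the product of the antisymmetrizers over the columns. Then $f=a_{T_\lambda}\bigl(b_{T_\lambda}g\bigr)$, the polynomial $b_{T_\lambda}g$ alternates on the variables occupying each column of $T_\lambda$, and $f$ is symmetric on the variables occupying each row. If $l=0$, that is $h(\lambda)\le k$, then $T_\lambda$ has at most $k$ rows and we may take $f'=f$, with the $X_i$ equal to the variable sets of the rows of $T_\lambda$; then $FS_nf=FS_nf'$ trivially. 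In general we use $\lambda\in H(k,l)$: since $\lambda_{k+1}\le l$, the subtableau of $T_\lambda$ formed by the rows of index $>k$ (the ``leg'') occupies at most $l$ columns, while its complement (the ``arm'') occupies at most $k$ rows. Accordingly we split $X^{(n)}$ as in \eqref{2.4}, taking the symmetric sets $X_1,\dots,X_{k'}$, $k'\le k$, to be the variable sets of the arm rows and the alternating sets $Y_1,\dots,Y_{l'}$, $l'\le l$, to be the variable sets of the (truncated) leg columns; these $k'+l'$ sets are pairwise disjoint and cover $X^{(n)}$. We then define $f'=\bigl(\prod_i s_{X_i}\bigr)\bigl(\prod_j a_{Y_j}\bigr)f$, where $s_{X_i}$ denotes the full symmetrizer on the indices of $X_i$ and $a_{Y_j}$ the full antisymmetrizer on those of $Y_j$; since the $X_i$ and $Y_j$ are pairwise disjoint these operators commute, so $f'$ is symmetric on each $X_i$ and alternating on each $Y_j$, and $f'\in FS_nf$ gives $FS_nf'\subseteq FS_nf$. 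The integers $k',l',|X_i|,|Y_j|$ are just the number of rows of $\lambda$ together with certain parts and column heights of $\lambda$, hence depend only on $\lambda$.

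\emph{Non-vanishing, module identification, uniqueness.} If $g=0$ there is nothing to prove, so assume $f\ne0$; then $FS_nf\cong\cS_\lambda$ is irreducible and $FS_nf'$, being a submodule, is either $0$ or all of $FS_nf$, so it suffices to show $f'\ne0$. This is exactly where $\lambda\in H(k,l)$ is used: one proves that $\bigl(\prod_i s_{X_i}\bigr)\bigl(\prod_j a_{Y_j}\bigr)e_{T_\lambda}$ is a nonzero element of the minimal left ideal $FS_ne_{T_\lambda}$ — in fact a nonzero scalar multiple of $e_{T'_\lambda}$ for a suitable tableau $T'_\lambda$ of the same shape $\lambda$ — so that $f'$ equals, up to a nonzero scalar, $e_{T'_\lambda}g$, which vanishes precisely when $e_{T_\lambda}g=f$ does. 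Hence $f'\ne0$ and $FS_nf=FS_nf'$. For the last two bullets: the list $k',l',|X_1|,\dots,|Y_{l'}|$ is read off from the diagram of $\lambda$ alone, so it is independent of the choice of $T_\lambda$; and, once $T_\lambda$ is fixed, the partition \eqref{2.4} is dictated solely by which row and which column of $T_\lambda$ contains each index, so it does not depend on $g$.

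\emph{Main obstacle.} The only delicate point is the non-vanishing claim: that composing the symmetrizers on the arm rows with the antisymmetrizers on the truncated leg columns does not annihilate $e_{T_\lambda}$ (equivalently, produces another Young symmetrizer of shape $\lambda$ up to a nonzero scalar). This is the combinatorial heart of the classical lemma and is precisely the step that fails outside $H(k,l)$; passing from ordinary to generalized polynomials is otherwise purely formal, since the constants of $W$ are untouched by every permutation used in the argument.
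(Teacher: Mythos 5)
Your proposal is correct and takes essentially the same route as the paper: both reduce the statement to the classical group-algebra lemma of Giambruno--Zaicev (\cite{GZ05}, Lemma 2.5.6), which yields a nonzero element $u e_{T_\lambda}\in FS_n e_{T_\lambda}$ symmetric/alternating under left multiplication on the prescribed sets, and then set $f'=u e_{T_\lambda}g$, the whole point being that the constants from $W$ are inert under the $S_n$-action on $gP_n$. The only caveat is your inessential over-claim that $u e_{T_\lambda}$ is a scalar multiple of a Young symmetrizer $e_{T'_\lambda}$; all that is needed (and all the classical lemma provides) is $u e_{T_\lambda}\neq 0$ together with the irreducibility of $FS_n e_{T_\lambda}$, which you also invoke.
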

\begin{proof}
	The proof of this lemma can be adapted from \cite[Lemma 2.5.6]{GZ05}. In fact, in that Lemma, it is proved that there exist a partition 
	\[X^{(n)}=X_1 \cup \ldots \cup X_{k'}\cup Y_1 \cup \ldots \cup Y_{l'}\]
	and an element $ue_{T_\lambda} \in FS_n e_{T_\lambda}$ such that \begin{itemize}
		\item for any fixed $X_i$, $i=1,\dots,k'$, and any permutation $\sigma$ of the elements in $X_i$ we have $\sigma ue_{T_\lambda} =ue_{T_\lambda}$,
		\item for any fixed $Y_i$, $i=1,\dots,l'$, and any permutation $\sigma$ of the elements in $Y_i$ we have $\sigma ue_{T_\lambda} = (-1)^\sigma ue_{T_\lambda} $
	\end{itemize} 
	Moreover, the partition of $X^{(n)}$ depends only on $T_\lambda$ and $k'$, $l'$ and the cardinalities of the partition classes depend only on $\lambda$. Clearly, $f'= ue_{T_\lambda}g$ is the desired element.
\end{proof}

If $\cV$ is a variety of $W$-algebras, we denote with $\cV^*$ the variety of $W$-superalgebras such that $A\in\cV^*$ if and only if $E(A)\in \cV$.

\begin{theorem}
\label{th: Grassmann envelope}
    Let $W$ be a finite dimensional associative unital algebra and let $A$ be a $W$-algebra whose generalized cocharacter multiplicities are contained in the hook $H(k,l)$. Then, there exists a finitely generated superalgebra $L=L^{(0)} \oplus L^{(1)}$ with $k$ even generators and $l$ odd generators such that $\gid(A)=\gid(E(L))$.
\end{theorem}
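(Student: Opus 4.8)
The plan is to take $L$ to be a relatively free $W$-superalgebra. Put $\cV=\var^W(A)$ and let $\cV^{\ast}$ be the class of $W$-superalgebras $B$ with $E(B)\in\cV$. First one checks that $\cV^{\ast}$ is itself a variety of $W$-superalgebras: it is closed under subobjects, homomorphic images and direct products, because $E(-)$ sends injections to injections, surjections to surjections, and $E(\prod_iB_i)$ embeds into $\prod_iE(B_i)$. Hence $\cV^{\ast}$ contains a relatively free object $L=L^{(0)}\oplus L^{(1)}$ on $k$ even generators $\xi_1,\dots,\xi_k$ and $l$ odd generators $\zeta_1,\dots,\zeta_l$; it is finitely generated, and since $L\in\cV^{\ast}$ we have $E(L)\in\cV$, so $\gid(A)=\gid(\cV)\subseteq\gid(E(L))$.

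For the reverse inclusion, since $\cher F=0$ it suffices to show that no multilinear $g\in gP_n\setminus\gid(A)$ belongs to $\gid(E(L))$. For such $g$ the $S_n$-submodule it generates in $gP_n(A)$ has an irreducible constituent isomorphic to some $\cS_\lambda$, and the hook hypothesis forces $\lambda\in H(k,l)$; combining this with the Young-symmetrizer description of $\cS_\lambda$ and Lemma \ref{2.5.6} produces $h\in FS_n g$ with $h\notin\gid(A)$ that is symmetric on each of the disjoint sets $X_1,\dots,X_{k'}$ and alternating on each of the disjoint sets $Y_1,\dots,Y_{l'}$, where $k'\le k$, $l'\le l$ and $\{x_1,\dots,x_n\}=X_1\cup\dots\cup X_{k'}\cup Y_1\cup\dots\cup Y_{l'}$. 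Since $\gid(E(L))$ is $S_n$-invariant, it is enough to prove $h\notin\gid(E(L))$.

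The key point is that $A\otimes E\in\cV^{\ast}$, because $E(A\otimes E)\cong A\otimes E(E)$ and $E(E)$ is a commutative unital $F$-algebra, so $\gid(A\otimes E(E))=\gid(A)$ and $E(A\otimes E)\in\cV$. Since $h\notin\gid(A)$ and $h$ is symmetric on every $X_i$, complete polarization (valid because $\cher F=0$) gives a nonvanishing evaluation of $h$ in $A$ in which all variables of $X_i$ receive a common value $\bar a_i$ and the $s$-th variable of $Y_j$ receives $b_{j,s}\in A$; write $q_j=|Y_j|$. Evaluate $h$ in $E(L)$ by sending every variable of $X_i$ to $\xi_i\otimes1\in L^{(0)}\otimes E_0$ and the $s$-th variable of $Y_j$ to $\zeta_j\otimes e_{j,s}\in L^{(1)}\otimes E_1$, with $e_{j,s}$ pairwise distinct generators of $E$ (admissible since $k'\le k$, $l'\le l$), and call the result $\Theta\in E(L)$. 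Now apply the $W$-algebra homomorphism $E(\rho)\colon E(L)\to E(A\otimes E)=A\otimes E(E)$ induced by the $W$-superalgebra homomorphism $\rho\colon L\to A\otimes E$ with $\rho(\xi_i)=\bar a_i\otimes1$ and $\rho(\zeta_j)=\sum_{s=1}^{q_j}b_{j,s}\otimes\eta_{j,s}$, where the $\eta_{j,s}\in E_1$ are pairwise distinct generators. Expanding $E(\rho)(\Theta)$ by multilinearity, the only surviving summands are those in which, inside each block $Y_j$, the elements $b_{j,1},\dots,b_{j,q_j}$ enter according to a permutation $\tau_j\in S_{q_j}$. For these the sign $\prod_j\sgn(\tau_j)$ produced by the alternation of $h$ on the $Y_j$ cancels the sign produced by reordering the anticommuting $\eta_{j,s}$, while the anticommuting $e_{j,s}$ absorb the ordering discrepancies among the monomials of $h$. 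Consequently $E(\rho)(\Theta)=\bigl(\prod_jq_j!\bigr)\,c\otimes\omega$, where $c\in A$ is the value of $h$ at the chosen evaluation and $\omega\in E(E)$ is a nonzero product of distinct generators. As $c\ne0$ and $\cher F=0$, this is nonzero, so $\Theta\ne0$, hence $h\notin\gid(E(L))$ and therefore $g\notin\gid(E(L))$ --- a contradiction. Thus $\gid(E(L))=\gid(A)$.

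The main obstacle I anticipate is the sign bookkeeping in the computation of $E(\rho)(\Theta)$: isolating the surviving summands and checking that all signs contributed by the alternation of $h$ and by the two families of anticommuting generators $\eta_{j,s}$ and $e_{j,s}$ collapse to the single positive scalar $\prod_jq_j!$ --- this is the same mechanism underlying Lemma \ref{3.7.4}, and one could alternatively phrase it through that lemma and the $\sim$-map. The preliminary verifications that $\cV^{\ast}$ is a variety and that $A\otimes E\in\cV^{\ast}$ are routine but not optional: they are precisely what makes the finitely generated $L$ rich enough to detect every non-identity of $A$ whose associated Young diagram lies in the hook $H(k,l)$.
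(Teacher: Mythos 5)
Your proposal is correct and follows essentially the same route as the paper: the same relatively free $W$-superalgebra $L=\F_{k,l}(\cV^*)$, the hook hypothesis together with Lemma \ref{2.5.6} to reduce to polynomials symmetric/alternating on blocks, the key fact that $A\otimes E\in\cV^*$ (via $E(A\otimes E)=A\otimes E(E)$ with $E(E)$ commutative), and the universal property of $L$ combined with Grassmann sign cancellation. The only differences are presentational: you prove $\gid(E(L))\subseteq\gid(A)$ contrapositively, which requires the additional (but valid, since $\cher F=0$) polarization step to obtain a nonvanishing evaluation with repeated values on the symmetric blocks, and you do the sign bookkeeping directly in $A\otimes E(E)$ rather than routing it through the $\sim$-map of Lemma \ref{3.7.4} as the paper does.
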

\begin{proof}
    Let $\cV=\var^W(A)$ and consider the variety of $W$-superalgebras $\mathcal{V}^*$ and its relatively free superalgebra 
    \[ L = \F_{k,l}(\cV^*)=\frac{W\langle y_1,\dots,y_k,z_1,\dots,z_l\rangle}{W\langle y_1,\dots,y_k,z_1,\dots,z_l\rangle\cap \gid_2(\cV^*)} \]
    on $k$ even free generators $l$ odd free generators. We shall prove that $\mathcal{V}$ is generated by $E(L)$.

    Clearly, $E(L) \in \mathcal{V}$ by the definition of $\mathcal{V}^*$. We next verify that $A$ satisfies all identities of $E(L)$.

    Let $f \in \gid(E(L))$ be a multilinear identity of $G(L)$ of degree $n$. We can assume that $f=e_{T_\lambda}g$ for some multilinear polynomial $g\in gP_n$. If $\lambda \notin H(k,l)$, then $f$ is an identity of $A$ since $g\chi_n(A)$ lies in the hook $H(k,l)$. Now, let $\lambda \in H(k,l)$. By applying Lemma \ref{2.5.6}, we may assume that
    \[
    f=f(y_1^1,\ldots,y_1^{m_1},\ldots,y_k^1,\ldots,y_k^{m_k},z_1^1,\ldots,z_1^{r_1},\ldots,z_l^1,\ldots,z_l^{r_l})
    \]
    depends on $k$ (possibly empty) sets of symmetric variables $\{ y_i^1,\ldots,y_i^{m_i} \}, 1 \le i \le k$ and on $l$ (possibly empty) sets of alternating variables $\{z_j^1,\ldots,z_j^{r_j}\}, 1 \le j \le l$. If we consider all $y_i^j$ as even variables and all $z_i^j$ as odd variables, then $f$ can be considered as a graded identity of the superalgebra $G(L)$. So, by Lemma \ref{3.7.4}, the superalgebra $L$ satisfies the graded identity $\Tilde{f}$. Now, we substitute all the variables in the set $\{ y_i^1,\ldots,y_i^{m_i} \}$ with the even generator $y_i\in L$, $1 \le i \le k$, and all the variables in the set $\{z_j^1,\ldots,z_j^{r_j}\}$ with the odd generator $z_j\in L$, $1 \le j \le l$. We obtain that \begin{equation} \label{eq: f=0 in L}
        \Tilde{f}(y_1,\ldots,y_1,\ldots,y_k,\ldots,y_k,z_1,\ldots,z_1,\ldots z_l,\ldots, z_l) = 0
    \end{equation}
    in $L$.

    Consider the superalgebra $A\otimes E = (A\otimes E_0) \oplus (A\otimes E_1)$ where the grading is determined by $E$. We note that $E(A\otimes E)= A\otimes R$ where $R=(E_0\otimes E_0) \oplus (E_1 \otimes E_1)$ is a commutative algebra, so $E(A\otimes E)$ satisfies all multilinear identities of $A$. Therefore, $E(A\otimes E)\in \cV$ and, by the definition of $\cV^*$, $A\otimes E\in \cV^*$. Now take some arbitrary elements $\Bar{y}_1^1,\ldots,\Bar{y}_1^{m_1}, \ldots,\Bar{y}_k^1,\ldots,\Bar{y}_k^{m_k}, \Bar{z}_1^1,\ldots,\Bar{z}_1^{r_1}, \ldots, \Bar{z}_l^1,\ldots,\Bar{z}_l^{r_l}$ in $A$, and consider 
    \begin{align*}
        q_i=\Bar{y}_i^1 \otimes a_i^1+\cdots+\Bar{y}_i^{m_i}\otimes a_i^{m_i}, \quad i=1,\ldots,k\\
        p_j=\Bar{z}_j^1\otimes b_j^1+\cdots+\Bar{z}_j^{r_j} \otimes b_j^{r_j}, \quad j=1,\ldots,l
    \end{align*}
    where $a_1^1,\ldots,a_k^{m_k} \in E_0$, $b_1^1,\ldots,b_l^{r_l}\in E_1$ are monomials of the Grassmann algebra $E$ written on distinct generators of $E$, hence $a_1^1\cdots a_k^{m_k}b_1^1\cdots b_l^{r_l} \neq 0$.

    Since $L$ is a relatively free algebra of $\cV^*$ and $A\otimes E\in \cV^*$, there exists an homomorphism of $W$-superalgebras sending $y_i\in L$ in $q_i\in A\otimes E$, $1 \le i \le k$, and $z_j\in L$ in $p_j$, $1 \le j \le l$. Applying such homomorphism to Equation \ref{eq: f=0 in L}, we get
    \begin{align*}
    0=\Tilde{f}(q_1,\ldots,p_l)=&\\
    m_1!\cdots m_k!r_1!&\cdots r_l!\ f(\Bar{y}_1^1,\ldots,\Bar{y}_1^{m_1},\ldots,\Bar{z}_l^1,\ldots,\Bar{z}_l^{r_l}) \otimes a_1^1\cdots a_1^{m_1}\cdots b_l^1 \cdots b_l^{r_l},
 \end{align*}
 since $(a_i^j)^2=(b_i^j)^2=0$ for all $i,j$, and $\Tilde{f}$ is symmetric on any set $\{ y_i^1,\ldots,y_i^{m_i} \}$ and on any set $\{ z_j^1,\ldots,z_j^{r_j} \}$.
 Therefore, $f$ vanishes on every evaluation of the variables with elements of $A$, that is $f\in \gid(A)$. We conclude that $\gid(E(L))\subseteq \gid(A)$ and we are done.
\end{proof}

An important corollary of the previous theorem is the following.

\begin{corollary}\label{cor: fin_gen}
	Let $\cV$ be a variety of $W$-algebras. If $\cV$ satisfies a generalized Capelli set, then $\cV=\cV(B)$ for some finitely generated $W$-algebra $B$.
\end{corollary}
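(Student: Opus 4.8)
The plan is to derive this directly from the Strip Theorem (Theorem~\ref{Striptheorem}) together with Theorem~\ref{th: Grassmann envelope} applied in the degenerate case $l=0$. First I would replace the variety $\cV$ by a concrete algebra generating it, namely the relatively free $W$-algebra $A := \WX/\gid(\cV)$; then $\gid(A)=\gid(\cV)$ and $\cV=\var^W(A)$. Since $\cV$ satisfies the generalized Capelli set of some rank $m$, so does $A$, and the Strip Theorem then tells us that the generalized cocharacters of $A$ lie in the strip of height $k:=m-1$, which is precisely the hook $H(k,0)$. Note in particular that the hypothesis of Theorem~\ref{th: Grassmann envelope} (cocharacter multiplicities in a hook) is now met with $l=0$.

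Next I would invoke Theorem~\ref{th: Grassmann envelope} with $l=0$: there exists a finitely generated $W$-superalgebra $L=L^{(0)}\oplus L^{(1)}$ — the relatively free superalgebra of $\cV^{*}$ on $k$ even generators and no odd generators — with $\gid(A)=\gid(E(L))$. Because $L$ has no odd free generators, its odd component vanishes, $L^{(1)}=0$, so $L=L^{(0)}$ is an ordinary (purely even) $W$-algebra, and its Grassmann envelope collapses: $E(L)=L^{(0)}\otimes E_{0}=L\otimes E_{0}$, with $W$ acting only on the $L$-factor and with componentwise multiplication (no signs occur since everything is even).

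It then remains to observe that passing from $L$ to the scalar extension $L\otimes E_{0}$ does not change the $T_W$-ideal of generalized identities. The inclusion $L\hookrightarrow L\otimes E_{0}$, $a\mapsto a\otimes 1$, is a homomorphism of $W$-algebras, whence $\gid(L\otimes E_{0})\subseteq\gid(L)$; conversely, since $E_{0}$ is a commutative unital $F$-algebra sitting centrally in $L\otimes E_{0}$, every multilinear generalized polynomial $f$ satisfies $f(a_{1}\otimes c_{1},\dots,a_{n}\otimes c_{n})=f(a_{1},\dots,a_{n})\otimes(c_{1}\cdots c_{n})$, so $f\in\gid(L)$ forces $f\in\gid(L\otimes E_{0})$, and in characteristic zero a $T_W$-ideal is determined by its multilinear components. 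Hence $\gid(A)=\gid(E(L))=\gid(L)$, and taking $B:=L$, a finitely generated $W$-algebra, we conclude $\cV=\var^W(A)=\var^W(B)$.

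The argument is essentially a bookkeeping assembly of two earlier results, so the only point requiring a little care is the last one: recognizing that the Grassmann envelope of a superalgebra with trivial odd part is a harmless commutative scalar extension, and checking (using multilinearity and the fact that $\cher F=0$) that this leaves the generalized identities untouched. Everything else — the choice of a generating algebra for $\cV$ and the reduction to a hook of the form $H(k,0)$ — is formal.
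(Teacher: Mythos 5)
Your proof is correct and follows essentially the same route as the paper: pass to the relatively free algebra $\F(\cV)$, use Theorem~\ref{Striptheorem} to place the multiplicities in the strip $H(k,0)$, apply Theorem~\ref{th: Grassmann envelope} with no odd generators so that $L^{(1)}=0$ and $E(L)=L\otimes E_0$, and conclude $\gid(E(L))=\gid(L)$. The only difference is that you spell out, via multilinearity and $\cher F=0$, why tensoring with the commutative unital algebra $E_0$ leaves the $T_W$-ideal unchanged, a point the paper states without proof.
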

\begin{proof}
    By \ref{Striptheorem}, the cocharacter sequence of $\cV$ is contained in a strip, that is a hook $H(k,0)$ for some integer $k$. By Theorem \ref{th: Grassmann envelope} applied to the relatively free $W$-algebra $\F(\cV)=\WX / \gid(\cV)$, there exists a superalgebra $L=L_0\oplus L_1$ generated by $k$ even elements such that $\cV=\var^W(E(L))$. Since all generators of $L$ are even, we have $L_1=0$. Therefore, $E(L)=L\otimes E_0$ and, since $E_0$ is a commutative algebra, $\gid(E(L))=\gid(L)$. So, $\cV=\var^W(L)$.
\end{proof}

In light of the previous results, in order to obtain a generalized version of the representability theorem it is enough to prove the following conjecture:

\begin{conjecture}
    Let $A$ be a finitely generated $W$-(super)algebra. Then, there exists a finite dimensional $W$-(super)algebra $B$ such that $\gid (A)=\gid(B)$.
\end{conjecture}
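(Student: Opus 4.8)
The natural plan is to transport Kemer's representability machinery---in the form developed for $G$-graded algebras and for algebras with a Hopf action---to the $W$-equivariant setting, carrying the left and right $W$-actions along every construction; since $W$-algebras sit slightly outside all of these frameworks, one must rebuild the theory rather than merely quote it. As a first reduction, assume---as the statement implicitly requires---that $A$ is $\gpi$; by Theorem~\ref{th: hook} this means the underlying $F$-algebra satisfies an ordinary polynomial identity, and, being finitely generated, it is then an affine $\Polid$ algebra, hence (by Shirshov's height theorem) satisfies some ordinary Capelli identity $Cap_m$. Passing to the unital $W$-algebra $\wa$ (Lemma~\ref{lem: wa is PI}), using the remark preceding Theorem~\ref{Striptheorem} together with the hook comparison for nested $T_W$-ideals, one sees that $\gid(A)$ itself lies in a strip. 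So one may set up the whole discussion inside the category of unital $W$-algebras---the non-unital case needing only the standard unitalization modifications of Kemer's theory, adapted to preserve the homomorphism $\pi\colon W\to A$ of Lemma~\ref{lem: identity_good}---with the goal of producing a finite dimensional $W$-algebra $B$ such that $\gid(B)=\gid(A)$.

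The second step is the $W$-equivariant structure theory of finite dimensional $W$-algebras. For unital $B$ the Jacobson radical $J=J(B)$ is automatically a $W$-ideal ($wj=(w1_B)j\in BJ\subseteq J$, and symmetrically $jw\in J$) and $B/J$ is a semisimple $W$-algebra, so what is needed is a $W$-equivariant Wedderburn--Malcev theorem producing a $W$-subalgebra $B_{ss}\cong B/J$ with $B=B_{ss}\oplus J$ as $W$-bimodules. Granting this, one introduces, for a $T_W$-ideal $\Gamma$, the generalized Kemer index $\mathrm{Ind}^W(\Gamma)=(\beta,\gamma)$, by measuring how many small sets of variables a non-identity can be made alternating on---and, in the super case, folded---together with the admissible excess encoding the number of radical layers, and one develops the notion of generalized Kemer $W$-polynomials and their Phoenix property.

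With these tools the proof is the standard descending induction on $\mathrm{Ind}^W$. Given $A$ with $\Gamma=\gid(A)$ of index $(\beta,\gamma)$, one builds a candidate finite dimensional $W$-algebra $B$ as a $W$-equivariant subalgebra of a generic-element algebra: $\beta$ generic copies of the semisimple part of a suitably chosen \emph{fundamental} finite dimensional $W$-subalgebra of $A$, glued to a $\gamma$-step nilpotent finite dimensional $W$-bimodule realizing the radical. By construction $\gid(B)\subseteq\Gamma$; for the reverse inclusion one argues that a non-identity of $B$ lying outside $\Gamma$ would have to carry a generalized Kemer polynomial configuration not supported by $B$, whereas if $\gid(B)\supsetneq\Gamma$ then its Kemer index drops strictly and one applies the inductive hypothesis. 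The super case runs entirely in parallel with the $\mathbb{Z}_2$-grading threaded through; and once the conjecture is known, Theorem~\ref{th: Grassmann envelope} together with Corollary~\ref{cor: fin_gen} immediately upgrades it to the full representability statement for arbitrary varieties of $W$-algebras.

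I expect the main obstacle to be precisely that $W$ is an \emph{arbitrary} finite dimensional algebra rather than a group algebra or a semisimple Hopf algebra: this removes the averaging arguments underlying both the $W$-equivariant Wedderburn--Malcev splitting and the classification of fundamental finite dimensional $W$-algebras up to generalized identities, and the two independent $W$-actions on the radical make the combinatorics of generalized Kemer polynomials appreciably heavier than in the graded or Hopf settings. A secondary but genuine difficulty is to realize the radical layers by an honestly \emph{finite dimensional} nilpotent $W$-bimodule without inflating the Kemer index of $\gid(B)$ beyond that of $\gid(A)$.
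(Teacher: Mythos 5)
The statement you are asked to prove is stated in the paper as a \emph{conjecture}: the authors give no proof and explicitly present it as the missing ingredient needed to upgrade Theorem~\ref{th: Grassmann envelope} and Corollary~\ref{cor: fin_gen} to a full representability theorem for $W$-algebras. So there is no argument in the paper to compare yours against, and the relevant question is whether your proposal actually closes the gap. It does not: what you have written is a program, not a proof. Every step that carries the real weight of Kemer-type representability --- the $W$-equivariant Wedderburn--Malcev splitting for an arbitrary (non-semisimple, non-Hopf) finite dimensional $W$, the definition and well-foundedness of a generalized Kemer index, the existence and Phoenix property of generalized Kemer polynomials, the construction of the finite dimensional candidate $B$ from generic elements together with a finite dimensional nilpotent $W$-bimodule realizing the radical, and the verification of both inclusions $\gid(B)\subseteq\gid(A)$ and $\gid(A)\subseteq\gid(B)$ --- is only named, and you yourself identify exactly these points as open obstacles in your final paragraph. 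Asserting that the induction on the index ``is the standard descending induction'' presupposes precisely the machinery whose existence is the content of the conjecture.

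There is also a concrete faulty inference in your first reduction. You claim that $A$ being $\gpi$ implies, via Theorem~\ref{th: hook}, that $A$ satisfies an ordinary polynomial identity. Theorem~\ref{th: hook} says something different: it is the containment of the \emph{generalized cocharacter multiplicities in a hook} (equivalently, the growth bounds) that is equivalent to ordinary PI, and satisfying some nontrivial generalized identity does not give this. For instance, if $W=F\times F$ acts on $A=F\oplus B$ componentwise with $B$ any non-PI algebra, then $A$ is $\gpi$ (the idempotent $(1,0)\in W$ cuts out the commutative component) but not PI. The hypothesis your argument actually needs is that $A$ is an ordinary PI-algebra --- which is indeed forced whenever a finite dimensional $B$ with $\gid(A)=\gid(B)$ exists, since such a $B$ satisfies a Capelli set and Theorems~\ref{Striptheorem} and~\ref{th: hook} then put the common multiplicities in a hook --- so this assumption should be made explicitly rather than derived from $\gpi$-ness. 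With that correction your reduction to the unital case via $\wa$ and the strip argument is reasonable, but the heart of the representability theorem remains unproved.
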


\section{Declarations}
The authors report there are no competing interests to declare.

\bibliographystyle{amsalpha}

\begin{thebibliography}{MR25a}
    \bibitem[Am65]{AM1}
S. A. Amitsur, {\em Generalized polynomial identities and pivotal monomials}, Trans. Amer. Math. Soc. {\bf 114} (1965), 210--226.

\bibitem[AR82]{AR82}
S. A. Amtisur and A. Regev, {\em P.I. algebras and their cocharacters}, J. Algebra {\bf 78} (1982), 248--254.

\bibitem[BD12]{BD12}
S. Boumova, V. Drensky, \emph{Cocharacters of polynomial identities of upper triangular matrices}, J. Algebra Appl. \textbf{11} (2012), no.~1, 24.

\bibitem[BMM96]{BMM96}
K. I. Beidar, W.S. Martindale, A.V. Mikhalev, \emph{Rings with generalized identities}. Monographs and Textboooks in Pure and Applied Mathematics, Marcel Dekker, Inc. (1996).

\bibitem[Ber82]{Ber82}
A. Berele, \emph{Homogeneous polynomial identities}, Isr. J. Math. \textbf{42} (1982), 258--272.

\bibitem[Ber96]{Ber96}
A. Berele, \emph{Cocharacter sequences of algebras with {Hopf} algebra actions}, J. Algebra \textbf{185} (1996), no.~3, 869--885.

\bibitem[BR83]{BR83}
A. Berele, A. Regev, \emph{Applications of hook {Young} diagrams to {P.I.} algebras}, J. Algebra \textbf{82} (1983), 559--567.

\bibitem[BR87]{BR87}
A. Berele, A. Regev, \emph{Hook Young diagrams with applications to combinatorics and to representations of Lie superalgebras}, Adv. Math. \textbf{64} (1987), 118--175.

\bibitem[BS15]{BS15}
M. Brešar, Š. Špenko, \emph{Functional identities on matrix algebras}, Algebr. Represent. Theory \textbf{18} (2015), No. 5, 1337--1356 

\bibitem[FH91]{FH91}
W. Fulton, J. Harris, \emph{Representation theory. A first course}, New York etc.: Springer-Verlag (1991).

\bibitem[Gor10]{Gor10}
A.~S. Gordienko, \emph{Codimensions of generalized polynomial identities}, Sb. Math. \textbf{201} (2010), no.~2, 235--251.

\bibitem[GZ05]{GZ05}
A. Giambruno, M. Zaicev, \emph{Polynomial identities and asymptotic methods}, Math. Surv. Monogr., vol. 122, Providence, RI: American Mathematical Society (AMS), 2005.

\bibitem[Kar16]{Kar16}
Y. Karasik, \emph{Kemer’s theory for $H$-module algebras with application to the PI exponent}, J. Algebra \textbf{457} (2016), 194-227.

\bibitem[Kem84]{Kem84}
A.R. Kemer, \emph{Varieties and $\mathbb{Z}_2$-graded algebras}, Izv. Akad. Nauk SSSR Ser. Mat. \textbf{48} (5) (1984), 1042–1059.

\bibitem[Mac98]{Mac98}
I. G. Macdonald, \emph{Symmetric functions and Hall polynomials}, 2nd ed., Oxford: Clarendon Press (1998).

\bibitem[Mel17]{Mel17}
P. Méliot, \emph{Representation theory of symmetric groups}, Chapman and Hall/CRC, May 2017.

\bibitem[MR25a]{MR25a}
F. Martino, C. Rizzo, \emph{The {$2 \times 2$} upper triangular matrix algebra and its generalized polynomial identities}, J. Algebra \textbf{666} (2025), 308--330.

\bibitem[MR25b]{MR25b}
F. Martino, C. Rizzo, \emph{Multipliers, ${W}$-algebras and the growth of generalized polynomial identities}, Preprint, {arXiv}:2502.12830 (2025).

\bibitem[Reg79]{Reg79}
A. Regev, \emph{Algebras satisfying a {Capelli} identity}, Isr. J. Math. \textbf{33} (1979), 149--154.

\bibitem[Reg94]{Reg94}
A. Regev, \emph{Young-derived sequences of {{\(S_n\)}}-characters}, Adv. Math. \textbf{106} (1994), no.~2, 169--197.
\end{thebibliography}

\end{document}